\newtheorem{theorem}{Theorem}
\newtheorem*{theorem*}{Theorem}
\newtheorem{lemma}{Lemma}
\newtheorem{prop}{Proposition}
\newtheorem{corollary}{Corollary}
\newtheorem{remark}{Remark}
\newtheorem{example}{Example}
\newtheorem{conj}{Conjecture}
\newtheorem{claim}{Claim}
\newtheorem*{keywords}{Keywords}
\newcommand{\R}{\mathbb{R}}
\newcommand{\N}{\mathbb{N}}
\newcommand{\D}{\mathcal D}
\newcommand{\MM}{\mathcal M}
\renewcommand{\L}{\mathcal L}
\renewcommand{\P}{\mathcal P}
\renewcommand{\ll}{\lambda_1}
\newcommand{\CC}{\mathcal C}
\newcommand{\rar}{\rightarrow}
\newcommand{\imp}{\Rightarrow}
\newcommand{\A}{\mathbf A}
\newcommand{\C}{\mathbf C}
\newcommand{\M}{\mathbf M}
\newcommand{\T}{\mathbf T}
\title{Switching Checkerboards}
\author[1]{David Ellison}
\author[2]{Bertrand Jouve}
\author[1]{Lewi Stone}
\affil[1]{RMIT University, School of Science, Melbourne, Australia}
\affil[2]{LISST UMR5193, Toulouse University, CNRS, France}
\date{}
\begin{document}
\maketitle

\begin{abstract}
   In order to study $\M(R,C)$, the set of binary matrices with fixed row and column sums $R$ and $C$, we consider sub-matrices of the form $\begin{pmatrix}
1 & 0 \\
0 & 1
\end{pmatrix}$
and $\begin{pmatrix}
0 & 1 \\
1 & 0
\end{pmatrix}$, called positive and negative checkerboard respectively. We define an oriented graph of matrices $G(R,C)$ with vertex set $\M(R,C)$ and an arc from $\A$ to $\A'$ indicates you can reach $\A'$ by switching a negative checkerboard in $\A$ to positive. We show that $G(R,C)$ is a directed acyclic graph and identify classes of matrices which constitute unique sinks and sources of $G(R,C)$. Given $\A,\A'\in\M(R,C)$, we give necessary conditions and sufficient conditions on $\M=\A'-\A$ for the existence of a directed path from $\A$ to $\A'$.

We then consider the special case of $\M(\D)$, the set of adjacency matrices of graphs with fixed degree distribution $\D$. We define $G(\D)$ accordingly by switching negative checkerboards in symmetric pairs. We show that $Z_2$, an approximation of the spectral radius $\lambda_1$ based on the second Zagreb index, is non-decreasing along arcs of $G(\D)$. Also, $\ll$ reaches its maximum in $\M(\D)$ at a sink of $G(\D)$. We provide simulation results showing that applying successive positive switches to an Erd\H os-R\'enyi graph can significantly increase $\ll$.
\end{abstract}

\begin{keywords}
Graph theory, binary matrices, spectral radius, graph modifications
\end{keywords}

\section*{Introduction}

In the study of diffusion processes along a network, the largest eigenvalue  of the adjacency matrix, called the spectral radius of the network and denoted by $\ll$, plays a critical role. For instance, in epidemic models, having a reproduction number $R_0$ greater or smaller than $1/\ll$ determines whether the epidemic will die out or keep spreading. Also, in the problem of synchronisation of coupled oscillators on a network, a key threshold criterion for the stability of the synchronised solution is based on $\ll$. The largest eigenvalue similarly affects other percolation processes such as wildfires or rumors spreading in social networks and is also of interest in the study of random matrices.

The parameter which has the greatest impact on the value of $\ll$ is the degree distribution of the network. Indeed, some lower bounds and approximations of $\ll$ are entirely determined by the degree distribution \cite{restrepo2007approximating}. Such approximations can be very precise for the vast majority of cases, but tend to become wildly inaccurate in extreme cases of networks with very particular topological properties. In this article, we consider the degree distribution fixed in order to analyse how the topology of the network affects $\ll$. The study of matrices with fixed degree distribution has applications to epidemic models \cite{coupechoux2014clustering}, synchronisation problems \cite{mcgraw2005clustering},  ecology \cite{brualdi1999nested}, LT codes \cite{jiang2016degree} and many other areas. Once the degree distribution is fixed, the degree assortativity is the most important parameter of real networks \cite{xulvi2004reshuffling,newman2003}. 

When working with a fixed degree distribution, we rely on a basic operation which consists in switching the $1$s and $0$s in a $2\times 2$ sub-matrix of the adjacency matrix of the form $\begin{pmatrix}
1 & 0 \\
0 & 1
\end{pmatrix}$
or $\begin{pmatrix}
0 & 1 \\
1 & 0
\end{pmatrix}$, called a checkerboard. Any such switching amounts to rewiring two edges in a way that leaves the degree distribution invariant. This allows us to visualise the set of binary matrices with a given degree distribution as the vertex set of a graph where two matrices are adjacent if they differ by one switch. It was shown in \cite{ryser1957} that this graph of matrices is connected.

In this paper, we call the two above sub-matrices positive and negative checkerboards. This polarisation of checkerboards defines an orientation of the edges of the graph of matrices. In \Cref{sec:Pos-Neg}, we introduce the basic definitions surrounding checkerboards. Then, in \Cref{sec:OGM}, we show that the oriented graph of matrices is a directed acyclic graph and discuss its sources and sinks as well as when $\A'$ can be reached from $\A$ by following an oriented path. Finally, in \Cref{sec:spectral radius}, we describe how the second Zagreb index, an approximation of $\ll$, evolves throughout the graph of matrices. We then show that $\ll$ reaches its global maximum at a sink of the graph of matrices and describe the evolution of $\ll$ through simulations. The proofs of two theorems from \Cref{sec:OGM} have been placed in the appendix for the sake of improving readability. In order to maximise the generality of our results, we have considered non-symmetric binary matrices with fixed row and column sums in all sections except \Cref{sec:spectral radius}, where we consider only adjacency matrices. Note that a non-symmetric matrix $\A$ can be interpreted as a characteristic sub-matrix of $\mathbf M=\begin{pmatrix}
0&\A\\
^t\A&0
\end{pmatrix}$, where $\mathbf M$ is the adjacency matrix of a bipartite graph.

\section{Positive and Negative Switches}\label{sec:Pos-Neg}

Given a $(0,  1)$-matrix $\A$, a \emph{checkerboard} of $\A$ is a $2\times 2$ sub-matrix of $\A$ of the form either $\begin{pmatrix}
1 & 0 \\
0 & 1
\end{pmatrix}$
or $\begin{pmatrix}
0 & 1 \\
1 & 0
\end{pmatrix}$. A checkerboard found on rows $i$ and $j$ and columns $k$ and $l$, with $i<j$ and $k<l$, is said to have \emph{coordinates} $(i,j,k,l)$. A \emph{unitary checkerboard} is a checkerboard of area $1$ and has  coordinates $(i,i+1,k,k+1)$.  \emph{Switching} a checkerboard refers to replacing one form with the other. Note that row and column sums are invariant under this operation. The following result was shown by Ryser in 1957:

\begin{theorem}[\cite{ryser1957}]\label{ryser} Given  two  matrices  $\A$ and  $\A'$ in  the  class  of $(0,  1)$-matrices having specified  row and  column  sums, one can  pass  from  $\A$ to  $\A'$  by  a  finite  sequence  
of switches.
\end{theorem}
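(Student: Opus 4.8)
The plan is to prove the statement by induction on the Hamming distance $d(\A,\A')$, the number of positions at which $\A$ and $\A'$ disagree. It suffices to show that whenever $\A\neq\A'$ there is a single switch turning $\A$ into a matrix $\A''$ with $d(\A'',\A')<d(\A,\A')$: applying such switches repeatedly decreases a nonnegative integer and so must terminate, necessarily at $\A''=\A'$, which produces the required finite sequence of switches.

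To locate a distance-decreasing switch, I would study the difference $\M=\A'-\A$, whose entries lie in $\{-1,0,1\}$ and which, because $\A$ and $\A'$ have identical row and column sums, has every row sum and every column sum equal to $0$. I encode $\M$ as a bipartite graph $H$ on the rows and columns of the matrix, drawing an edge labelled $+$ (resp.\ $-$) between row $i$ and column $k$ whenever $\M_{ik}=+1$ (resp.\ $-1$). The vanishing of the margins says exactly that at each vertex the number of incident $+$ edges equals the number of incident $-$ edges. Hence, starting from any edge and alternately following a $+$ edge and a $-$ edge---always possible by this balance---one traces a walk that must eventually repeat a vertex, extracting an alternating cycle in $H$.

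Next I would fix an alternating cycle $C$ of \emph{minimum} length and examine three of its consecutive edges, which together involve exactly two rows and two columns. The alternation of signs forces three entries of the corresponding $2\times 2$ submatrix of $\A$ to be determined and to coincide with three corners of a checkerboard, the fourth corner being a single undetermined entry. When that entry completes the checkerboard, switching it repairs the three disagreements carried by the cycle edges (each such edge marks a position where $\A$ and $\A'$ differ, so flipping it produces agreement) while disturbing at most the one remaining corner, so $d(\cdot,\A')$ falls by at least two, as required.

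The main obstacle---and the delicate point of the argument---is to guarantee that a suitable checkerboard is actually present, i.e.\ that the free corner can be made to complete one. This is where minimality of $C$ is used: if for a chosen triple the corner fails, then that corner is either a chord of $H$ of the correct sign, in which case rerouting $C$ through it yields a strictly shorter alternating cycle and contradicts minimality, or else one passes to another triple along $C$; a short case analysis on the sign patterns shows that some choice must succeed. I expect verifying this existence claim in full generality, keeping track of the sign bookkeeping and the rerouting, to be the part requiring the most care, whereas the inductive scaffolding around it is routine. One may also note that a switch is reversible, so ``reachable by a finite sequence of switches'' is a symmetric---indeed an equivalence---relation; this is not needed above but reconciles the symmetry of the statement in $\A$ and $\A'$.
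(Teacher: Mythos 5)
The paper does not prove this statement; it is quoted from Ryser's 1957 paper, so there is no internal proof to compare against. Your strategy (minimal alternating cycle in the bipartite sign graph of $\M=\A'-\A$, then a checkerboard located on three consecutive cycle positions) is the classical one, but your inductive step contains a genuine gap: it is \emph{not} true that whenever $\A\neq\A'$ some single switch applied to $\A$ strictly decreases the Hamming distance. Take
$$\A=\begin{pmatrix} 0&1&1&0\\0&0&1&1\\1&0&0&1\\1&1&0&0\end{pmatrix},\qquad
\A'=\begin{pmatrix} 1&0&1&0\\0&1&0&1\\1&0&1&0\\0&1&0&1\end{pmatrix}.$$
Both have all row and column sums equal to $2$, and $\M=\A'-\A$ is supported on a single chordless alternating cycle of length $8$, so $d(\A,\A')=8$. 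A switch changes the distance by $4-2s$, where $s$ is the number of its four corners at which $\A$ and $\A'$ currently disagree; hence a distance-decreasing switch needs $s\geq 3$, and the only $2\times2$ submatrices with three corners on the cycle are exactly your ``consecutive triples.'' One checks directly that none of these eight triples completes to a checkerboard of $\A$ (every free fourth corner has the wrong value), and no rerouting is available since all chords of the cycle carry a $0$ of $\M$. So every choice of triple fails, your ``short case analysis on the sign patterns'' cannot succeed, and the induction stalls.

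The repair is precisely the observation you dismiss in your last sentence as ``not needed'': when a triple is stuck for $\A$ (the free corner equals $0$ for a $+,-,+$ triple, resp.\ $1$ for a $-,+,-$ triple, and carries a $0$ of $\M$), the corresponding $2\times2$ submatrix of $\A'$ \emph{is} a checkerboard, and switching it in $\A'$ decreases $d(\A,\A')$ by at least $2$. Since switches are reversible, one should run the induction on $d(\A,\A')$ allowing a switch on either endpoint, concatenating the two half-sequences (one reversed) at the end. With that modification, a single triple of the minimal cycle always suffices---either it switches in $\A$, or the chord argument shortens the cycle, or it switches in $\A'$---and the argument closes. In the example above, switching rows $1,4$ and columns $1,2$ of $\A'$ does reduce the distance, even though no switch of $\A$ does.
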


This property makes switching checkerboards an essential tool for studying classes of matrices with fixed row and column sums as well as classes of graphs with fixed degree distributions. Applying successive random switches to a binary matrix creates a Markov chain which visits all possible matrices with the same row and column sums. This Markov chain can then be used to sample randomly matrices with a fixed degree distribution \cite{artzy2005generating}. 

In this paper, we attribute a sign to checkerboards and switches: a checkerboard of the form $\begin{pmatrix}
1 & 0 \\
0 & 1
\end{pmatrix}$ is said to be \emph{positive} while a checkerboard of the form $\begin{pmatrix}
0 & 1 \\
1 & 0
\end{pmatrix}$ is said to be \emph{negative}. A \emph{positive switch} corresponds to replacing a negative checkerboard with a positive one while a \emph{negative switch} is the reverse. We define a \emph{switching matrix} $\C_{i,j,k,l}$, with $i<j$ and $k<l$, as having coefficients $c_{ik}=c_{jl}=1$, $c_{il}=c_{kj}=-1$ and 0 elsewhere. Thus, a positive (resp. negative) switch of coordinates $(i,j,k,l)$ corresponds to adding (resp. subtracting) $\C_{i,j,k,l}$. Note that $\C_{i,j,k,l}=\sum_{i\leq p<j,k\leq q<l} \C_{p,p+1,q,q+1}$; i.e. any switching matrix is a sum of \emph{unitary switching matrices}. Note also that the unitary switching matrices are linearly independent and form a basis of the space of zero-sum (for rows and columns) matrices.

\section{Oriented Graph of Matrices}\label{sec:OGM}

\subsection{Sources and Sinks}

Given vectors $R\in \N^p$ and $C\in \N^q$, let $\MM(R,C)$ denote the set of binary matrices with row and column sums $R$ and $C$. We assume $\MM(R,C)\neq\emptyset$. Let $G(R,C)$ denote the oriented graph with vertex set $\MM(R,C)$ and there is an arc from $\A$ to $\A'$ if $\A'$ can be obtained from $\A$ with a positive switch.

As mentioned in the introduction, the undirected graph underlying $G(R,C)$, called \emph{interchange graph} in \cite{brualdi1980matrices}, is well-studied in the literature. In \cite{artzy2005generating}, multiple algorithms which generate random networks with a fixed degree distribution are based on this graph. Our purpose here, however, is to focus on properties which derive from the orientation.

\begin{prop}\label{acyclic}
The graph $G(R,C)$ is connected and acyclic.
\end{prop}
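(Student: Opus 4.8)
The plan is to handle the two assertions separately, since they rely on quite different ideas. Connectedness is immediate from the literature: by \Cref{ryser}, any two matrices in $\MM(R,C)$ can be joined by a finite sequence of switches, and each individual switch — whether positive or negative — is an edge of the undirected graph underlying $G(R,C)$. Hence that underlying graph is connected, which is exactly what connectedness of $G(R,C)$ means.

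For acyclicity, the strategy is to exhibit a real-valued potential function on $\MM(R,C)$ that strictly increases along every arc of $G(R,C)$. The existence of such a potential rules out directed cycles at once, since traversing a cycle would have to return the potential to its starting value. Concretely, I would define, for $\A=(a_{ik})\in\MM(R,C)$,
\[
\Phi(\A)=\sum_{i,k} i\,k\,a_{ik}.
\]
I would then compute the effect of a single positive switch. An arc from $\A$ to $\A'$ corresponds to adding a switching matrix $\C_{i,j,k,l}$ with $i<j$ and $k<l$, which flips $a_{ik}$ and $a_{jl}$ from $0$ to $1$ and flips $a_{il}$ and $a_{jk}$ from $1$ to $0$, leaving all other entries unchanged. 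Therefore
\[
\Phi(\A')-\Phi(\A)=ik+jl-il-jk=(i-j)(k-l).
\]
Since $i<j$ and $k<l$, both factors are negative, so $\Phi(\A')-\Phi(\A)>0$.

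It then follows that $\Phi$ is strictly increasing along every arc, so for any directed walk $\A_0\to\A_1\to\cdots\to\A_m$ with $m\geq1$ we have $\Phi(\A_0)<\Phi(\A_m)$ and hence $\A_0\neq\A_m$. No directed cycle can exist, and $G(R,C)$ is acyclic. I do not anticipate a genuine obstacle here; the only real decision is the choice of the bilinear weight $w(i,k)=ik$, and this choice is essentially forced. Indeed, for any product weight $w(i,k)=f(i)g(k)$ the increment factors as $\bigl(f(i)-f(j)\bigr)\bigl(g(k)-g(l)\bigr)$, which has a constant sign over all $i<j$, $k<l$ precisely when $f$ and $g$ are both strictly monotone in the same direction; the linear choice $f(i)=i$, $g(k)=k$ is simply the cleanest such instance.
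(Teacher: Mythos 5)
Your proposal is correct and follows essentially the same route as the paper: the paper's proof uses the identical potential $I(\A)=\sum_{i,j}ij\,a_{ij}$, computes the same increment $(i-j)(k-l)>0$ along each arc, and likewise invokes Ryser's theorem for connectedness. No gaps.
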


\begin{proof}
For $\A\in\MM(R,C)$, let $I(\A)=\sum_{i,j}ija_{ij}$. Let $\A'$ be obtained from $\A$ with a positive switch of coordinates $(i,j,k,l)$ where $i<j$ and $k<l$; i.e there is an arc from $\A$ to $\A'$ in $G(R,C)$. Then, $$I(\A')-I(\A)=I(\C_{i,j,k,l})=(i-j)(k-l)>0.$$ Since $I$ increases along arcs, and therefore along directed paths, $G(R,C)$ is acyclic. 

The connectedness property is Ryser's theorem: a sequence of (positive or negative) switches leading from $\A$ to $\A'$ corresponds to an undirected path in $G(R,C)$.
\end{proof}

Recall that directed acyclic graphs have \emph{sources} and \emph{sinks} which are vertices with zero in-degree and out-degree, respectively. In $G(R,C)$, sources and sinks are, respectively, matrices with no positive checkerboards and no negative checkerboards.

A matrix $\A\in\MM(R,C)$ is said to be \emph{nested} (resp. anti-nested) if the sequence $01$ (resp. $10$) does not appear in any row or column; i.e if the 1s occur before the $0$s (resp. $0$s before $1$s) in every row and column. Nested graphs have important applications, in particular in ecology \cite{brualdi1999nested}. For our purpose, the nested case is both trivial  and theoretically important, as indicated by the following proposition and its corollary.

\begin{prop}\label{nested}
If $R$ and $C$ are non-increasing, then $\A\in\MM(R,C)$ is nested if and only if it has no checkerboards. 
\end{prop}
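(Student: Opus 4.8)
The plan is to prove the two implications separately, noting that only the converse (no checkerboards $\Rightarrow$ nested) actually requires $R$ and $C$ to be non-increasing; the forward direction holds for arbitrary row and column sums.

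For the forward direction, I would argue directly that any checkerboard produces a forbidden $01$ pattern. Fix a checkerboard on rows $i<j$ and columns $k<l$. If it is positive, then $a_{jk}=0$ and $a_{jl}=1$, so row $j$ contains the sequence $01$; if it is negative, then $a_{ik}=0$ and $a_{il}=1$, so row $i$ contains $01$. In either case $\A$ is not nested. Hence a nested matrix has no checkerboard, with no hypothesis on $R,C$.

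For the converse I would use the contrapositive: assume $\A$ is not nested and exhibit a checkerboard, using monotonicity. Being not nested means a $01$ appears in some row or some column. Treat the row case first: suppose $a_{jk}=0$ and $a_{jl}=1$ for some row $j$ and columns $k<l$. Let $S_k$ and $S_l$ denote the sets of rows in which columns $k$ and $l$, respectively, carry a $1$, so $|S_k|=C_k$ and $|S_l|=C_l$. Because $C$ is non-increasing and $k<l$, we have $|S_k|\ge|S_l|$; since $j\in S_l\setminus S_k$, the identity $|S_k\setminus S_l|-|S_l\setminus S_k|=|S_k|-|S_l|\ge 0$ forces $S_k\setminus S_l$ to be nonempty. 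Any row $i$ in $S_k\setminus S_l$ satisfies $a_{ik}=1$, $a_{il}=0$, and $i\ne j$ (as $a_{ik}\ne a_{jk}$), so rows $\{i,j\}$ and columns $\{k,l\}$ form a checkerboard (positive if $i<j$, negative otherwise).

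The column case is entirely symmetric: a $01$ in some column $l$ (rows $i<j$ with $a_{il}=0$, $a_{jl}=1$) together with $R_i\ge R_j$, coming from $R$ non-increasing, yields via the same counting inequality a column $k$ with $a_{ik}=1$, $a_{jk}=0$, completing a checkerboard. The only real point requiring care is the counting step that guarantees the relevant set difference is nonempty; everything else is bookkeeping over the four entries and the two possible orientations. It is worth recording that monotonicity is genuinely needed here: $\begin{pmatrix}0&1\\0&1\end{pmatrix}$ has no checkerboard yet is not nested, and its column sums $(0,2)$ are increasing.
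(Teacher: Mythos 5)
Your proposal is correct and follows essentially the same route as the paper: the forward direction by observing every checkerboard forces a $01$ in some row, and the converse by the counting/contrapositive argument that a $01$ in a row (or column) together with the monotonicity of $C$ (or $R$) produces a complementary $10$ in another row (or column). You simply make explicit the set-difference bookkeeping and the column case that the paper leaves to symmetry, and your added counterexample showing monotonicity is necessary is a nice touch but not part of the paper's argument.
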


\begin{proof}
If $\A$ is nested, the sequence $01$ does not appear in any row or column of $\A$. Hence, there is no checkerboard in $\A$.

Conversely, if $\A$ is not nested, there is a row or column containing the sequence $01$. Say it is a row; since the column with the $0$ has degree at least equal to that with the $1$, there is another row containing $10$ in the same two columns. Hence, there is a checkerboard.
\end{proof}

\begin{corollary}\label{singleton}
The set $\MM(R,C)$ is a singleton if and only if it includes a matrix which becomes nested after reordering its rows and columns by non-increasing degree.
\end{corollary}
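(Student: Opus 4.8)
The plan is to prove Corollary~\ref{singleton} by combining Ryser's theorem (Theorem~\ref{ryser}) with Proposition~\ref{nested}, after reducing to the case where $R$ and $C$ are non-increasing. The key observation is that reordering the rows and columns of every matrix in $\MM(R,C)$ by a fixed permutation induces a bijection between $\MM(R,C)$ and $\MM(R',C')$, where $R'$ and $C'$ are the correspondingly permuted (hence non-increasing) sum vectors. Since this bijection preserves checkerboards and, in particular, preserves cardinality, it suffices to prove the statement assuming $R$ and $C$ are already non-increasing. So the first step is to state this reduction carefully and observe that ``becomes nested after reordering by non-increasing degree'' is exactly the condition ``is nested'' in the reordered class $\MM(R',C')$ with non-increasing sums.

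Assuming $R$ and $C$ non-increasing, I would prove the two directions as follows. For the harder direction, suppose $\MM(R,C)$ contains a nested matrix $\A$. By Proposition~\ref{nested}, $\A$ has no checkerboards. I claim $\A$ is the only element of $\MM(R,C)$. Take any other $\A'\in\MM(R,C)$; by Ryser's theorem there is a finite sequence of switches from $\A$ to $\A'$. But a switch can only be applied to a submatrix that is a checkerboard, and $\A$ has none, so no switch can be applied to $\A$ at all. Hence the sequence has length zero and $\A'=\A$, proving $\MM(R,C)=\{\A\}$.

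For the converse, suppose $\MM(R,C)$ is a singleton, say $\MM(R,C)=\{\A\}$. I must show $\A$ is nested. Suppose not; then by Proposition~\ref{nested} (using that $R,C$ are non-increasing) $\A$ contains a checkerboard. Switching that checkerboard produces a matrix $\A'\in\MM(R,C)$ with $\A'\neq\A$ (the switch changes the entries in the four checkerboard positions), contradicting that $\MM(R,C)$ is a singleton. Hence $\A$ is nested, and in the original unordered class this says the unique matrix becomes nested after reordering by non-increasing degree.

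The main obstacle, and the step most in need of care, is the reduction to non-increasing $R$ and $C$: I need to verify that simultaneously permuting rows by one permutation and columns by another sends $\MM(R,C)$ bijectively to the reordered class, that it carries checkerboards to checkerboards (so ``nested after reordering'' is well-defined and matches Proposition~\ref{nested}), and that it preserves being a singleton. Once this bijection is in place, both implications follow directly from Proposition~\ref{nested} and Theorem~\ref{ryser}, with essentially no further computation. One subtlety to flag is that Proposition~\ref{nested} is stated only for non-increasing $R,C$, which is precisely why the reduction is unavoidable rather than cosmetic.
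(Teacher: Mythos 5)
Your proposal is correct and follows essentially the same route as the paper: the paper's (very terse) proof likewise combines Proposition~\ref{nested}, Theorem~\ref{ryser}, and the invariance of checkerboards under row and column permutation, which is exactly the reduction you spell out. Your version simply makes explicit the two directions and the permutation bijection that the paper leaves implicit.
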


\begin{proof}
This result derives from \Cref{nested} by using \Cref{ryser} and the fact that the number of checkerboards in a matrix is invariant under row and column permutation.
\end{proof}

We define a \emph{zebra} as a matrix in $\MM(R,C)$ which is the sum of two matrices, one nested and one anti-nested. The name zebra refers to the three stripes formed by the $1$s and $0$s in the matrix, as shown in \Cref{ex:Z&A}. We say that a zebra is \emph{split vertically} (resp. \emph{horizontally}) if no column (resp. row) has $1$s from both the nested and anti-nested parts (see \Cref{ex:Z&A}). In other words, a split zebra can be split along a vertical (or horizontal) axis so that the left (resp. top) half is nested while the right (resp. bottom) half is anti-nested. Note that the two "halves" need not be of equal size. Also, we define an \emph{anti-zebra} as the complement of the vertical reflection of a zebra; i.e. $b_{ij}=1-a_{n-i,j}$ (see \Cref{ex:Z&A}).

\begin{example}[Zebras and Anti-zebras]\label{ex:Z&A}
The matrices $\A$ and $\A'$ are zebras. In $\A$, the nested and anti-nested parts overlap horizontally and vertically, so $\A$ is not split. Meanwhile, $\A'$ is a horizontally split zebra, where the top three rows are nested and the bottom three anti-nested. Matrices $\mathbf B$ and $\mathbf B'$ are anti-zebras which are the complement of the vertical reflection of $\A$ and $\A'$, so $\mathbf B'$ is horizontally split and $\mathbf B$ is not split.
$$\A=\begin{pmatrix}
1&1&1&1&1&0\\
1&1&1&1&0&0\\
1&1&0&0&0&1\\
1&0&0&0&0&1\\
1&0&0&0&1&1\\
0&0&1&1&1&1
\end{pmatrix}
\hspace{1cm} \text{and} \hspace{1cm} \mathbf{B}=\begin{pmatrix}
1&1&0&0&0&0\\
0&1&1&1&0&0\\
0&1&1&1&1&0\\
0&0&1&1&1&0\\
0&0&0&0&1&1\\
0&0&0&0&0&1
\end{pmatrix}$$
$$\A'=\begin{pmatrix}
1&1&1&1&1&0\\
1&1&1&1&0&0\\
1&1&0&0&0&0\\
0&0&0&0&0&1\\
0&0&0&0&1&1\\
0&0&1&1&1&1
\end{pmatrix}
\hspace{1cm} \text{and} \hspace{1cm} \mathbf{B}'=\begin{pmatrix}
1&1&0&0&0&0\\
1&1&1&1&0&0\\
1&1&1&1&1&0\\
0&0&1&1&1&1\\
0&0&0&0&1&1\\
0&0&0&0&0&1
\end{pmatrix}$$
\end{example}

Like nested graphs, zebras and anti-zebras can be characterised by the absence of certain sub-matrices:

\begin{claim}\label{geometry}
If we exclude full (resp. empty) rows and columns, zebras (resp. anti-zebras) are exactly the matrices without $\begin{pmatrix}
0&1\\
*&0
\end{pmatrix}$ or
$\begin{pmatrix}
0&*\\
1&0
\end{pmatrix}$ (resp.  $\begin{pmatrix}
*&1\\
1&0
\end{pmatrix}$ or
$\begin{pmatrix}
0&1\\
1&*
\end{pmatrix}$) sub-matrices, where $*$ denotes either $0$ or $1$.
\end{claim}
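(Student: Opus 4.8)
The plan is to prove the two inclusions of the zebra characterisation directly and then deduce the anti-zebra case by symmetry. The easy inclusion is zebra $\Rightarrow$ no forbidden pattern, and it needs no hypothesis on full rows or columns. I would write a zebra as $\A=N+A$ with $N$ nested, $A$ anti-nested, on disjoint supports (so $N,A\le\A$ entrywise and every $1$ of $\A$ belongs to exactly one of them). If $\begin{pmatrix}0&1\\ *&0\end{pmatrix}$ occurred at rows $i<j$, columns $k<l$, then the $1$ at $(i,l)$ lies in $N$ or in $A$: if in $N$, then $a_{ik}=0$ forces $n_{ik}=0$, so row $i$ of $N$ shows the sequence $01$, contradicting nestedness; if in $A$, then $a_{jl}=0$ forces $a^{A}_{jl}=0$, so column $l$ of $A$ shows $10$, contradicting anti-nestedness. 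The pattern $\begin{pmatrix}0&*\\1&0\end{pmatrix}$ is excluded by the transposed argument.

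For the converse I would assume $\A$ has no full row or column (as the statement permits) and no forbidden pattern, and build the decomposition explicitly. The first step is to show every row and column has the shape $1^a0^b1^c$, i.e. its zeros are contiguous. Indeed, if some row had $0,1,0$ in columns $k_1<k_2<k_3$, then since column $k_2$ is not full it carries a $0$ in some row $j$: if $j$ lies below row $i$, the submatrix on rows $i,j$ and columns $k_1,k_2$ is $\begin{pmatrix}0&1\\ *&0\end{pmatrix}$; if $j$ lies above, the submatrix on rows $j,i$ and columns $k_2,k_3$ is $\begin{pmatrix}0&*\\1&0\end{pmatrix}$; either way a forbidden pattern appears. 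Running the same argument on the transpose, using that no row is full, gives the three-stripe shape for columns as well. This is exactly where the no-full-row/column hypothesis is indispensable, as the single row $010$ (whose middle column is full) shows the characterisation fails without it.

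The heart of the converse is the second step, which I expect to be the main obstacle. I would set $N$ to be the maximal all-ones staircase anchored at the top-left corner, $n_{ij}=1$ precisely when the rectangle $[1,i]\times[1,j]$ is entirely $1$ in $\A$; its support is closed downward and leftward, so $N$ is automatically nested and $N\le\A$. It then remains to check $\A-N$ is anti-nested, i.e. has no $10$ in any row or column. Suppose $\A-N$ had $10$ in row $i$ at columns $k<l$. The entry $0$ at $(i,l)$ cannot come from $n_{il}=1$ (that would force $n_{ik}=1$ and kill the $1$ at $(i,k)$), so $a_{il}=0$; the three-stripe shape then places $k$ in the initial block of $1$s of row $i$, whence $a_{ik'}=1$ for all $k'\le k$ and the witness of $n_{ik}=0$ must sit at some $(i',k')$ with $i'<i$, $k'\le k$, $a_{i'k'}=0$. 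The submatrix on rows $i',i$ and columns $k',l$ is then $\begin{pmatrix}0&*\\1&0\end{pmatrix}$, a contradiction; the column case is symmetric and produces $\begin{pmatrix}0&1\\ *&0\end{pmatrix}$. Hence $\A-N$ is anti-nested and $\A=N+(\A-N)$ is a zebra.

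Finally, the anti-zebra statement comes for free from the defining map $T$ sending $\A$ to the complement of its vertical reflection. This $T$ is an involution, interchanges zebras with anti-zebras, sends full rows and columns to empty ones, and on a $2\times2$ block acts by complementation followed by a row swap, which carries $\begin{pmatrix}0&1\\ *&0\end{pmatrix}$ to $\begin{pmatrix}*&1\\1&0\end{pmatrix}$ and $\begin{pmatrix}0&*\\1&0\end{pmatrix}$ to $\begin{pmatrix}0&1\\1&*\end{pmatrix}$. Applying the zebra equivalence to $T(\A)$ therefore yields the anti-zebra equivalence verbatim. The delicate points to watch throughout are the correct invocation of the no-full-row/column hypothesis in the three-stripe step and keeping the orientation of each $2\times2$ pattern straight when reading it off the submatrix.
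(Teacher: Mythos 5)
Your proof is correct in both directions, and it does substantially more than the paper's own argument. The paper disposes of this claim in one line: it asserts that the absence of the forbidden sub-matrices ``derives from the geometry of the zebra'' and explicitly leaves the converse---the direction that occupies most of your effort---as an exercise, since only the forward implication is used later (to conclude that zebras are sinks of $G(R,C)$). Your forward direction makes the paper's geometric assertion rigorous by asking which of the two parts ($N$ nested or $A$ anti-nested) carries the $1$ in the offending pattern and exhibiting a $01$ in a line of $N$ or a $10$ in a line of $A$; this is presumably what the authors had in mind. Your converse is genuinely new content relative to the paper: the reduction to the three-stripe shape $1^a0^b1^c$ of every row and column (which is precisely where the no-full-row/column hypothesis is consumed, as your $\begin{pmatrix}0&1&0\end{pmatrix}$ example shows), followed by the explicit decomposition with $N$ the maximal upper-left all-ones staircase and the verification that any $10$ in a line of $\A-N$ forces a forbidden pattern, is a clean constructive proof, and I checked that each $2\times 2$ pattern you read off has the right orientation. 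The transfer to anti-zebras via the involution $b_{ij}=1-a_{n+1-i,j}$ is also correct: it exchanges the two families of forbidden patterns and swaps full with empty lines. One cosmetic remark: the support of your $N$ is closed under \emph{decreasing} row index, so in matrix orientation it is closed upward and leftward rather than ``downward and leftward''; this has no effect on the argument.
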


\begin{proof}
    The absence of those sub-matrices derives from the geometry of the zebra. The converse, which is not needed to prove the theorem, we leave as an exercise.
\end{proof}

Note that the forbidden sub-matrices include negative checkerboards; thus zebras and anti-zebras form sinks of $G(R,C)$.

\begin{claim}\label{no sub}
Split zebras, split anti-zebras and their complements include at most one of the following vectors as sub-matrix: $\begin{pmatrix}
1&0&1
\end{pmatrix}$, $\begin{pmatrix}
0&1&0
\end{pmatrix}$,
$\begin{pmatrix}
1\\
0\\
1
\end{pmatrix}$ and $\begin{pmatrix}
0\\
1\\
0
\end{pmatrix}$. In particular, horizontally split zebras have no $\begin{pmatrix}
1&0&1
\end{pmatrix}$, $\begin{pmatrix}
0&1&0
\end{pmatrix}$ and $\begin{pmatrix}
0\\
1\\
0
\end{pmatrix}$ sub-matrix. 
\end{claim}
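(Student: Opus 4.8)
The plan is to prove everything for a single base case, the horizontally split zebra, and then transport the conclusion to the remaining families (vertically split zebras, the two kinds of split anti-zebras, and all of the complements) by the symmetries of transposition, complementation and vertical reflection. So first I would pin down the exact row and column shapes of a horizontally split zebra $\A$. By definition its rows split into a contiguous top block coming from the nested summand and a contiguous bottom block coming from the anti-nested summand. By the defining property of nested (resp. anti-nested) matrices, each top row has the form $1^a0^b$ and each bottom row the form $0^a1^b$, so \emph{no} row contains $\begin{pmatrix}1&0&1\end{pmatrix}$ or $\begin{pmatrix}0&1&0\end{pmatrix}$. Fixing a column, the top block contributes a segment $1^\ast 0^\ast$ (nested) sitting above a bottom-block segment $0^\ast 1^\ast$ (anti-nested), so every column has the shape $1^\ast 0^\ast 1^\ast$.

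Next I would record the two elementary shape facts I need: a column of shape $1^\ast 0^\ast 1^\ast$ contains no $\begin{pmatrix}0\\1\\0\end{pmatrix}$ (once one passes to the final run of $1$s one never again meets a $0$), although it may contain $\begin{pmatrix}1\\0\\1\end{pmatrix}$; dually a column of shape $0^\ast 1^\ast 0^\ast$ contains no $\begin{pmatrix}1\\0\\1\end{pmatrix}$. Combined with the row analysis of the previous paragraph, this shows that a horizontally split zebra contains, among the four forbidden vectors, only possibly the column vector $\begin{pmatrix}1\\0\\1\end{pmatrix}$. This is precisely the ``in particular'' assertion, and it already establishes ``at most one'' in the base case.

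Then I would write down the effect of the three symmetries on the four vectors and on the block shapes, and verify that each symmetry sends a class named in the statement to another such class. Transposition exchanges the two row vectors with the corresponding column vectors and turns a vertically split zebra into a horizontally split one; complementation exchanges $0$ and $1$, hence swaps $\begin{pmatrix}1&0&1\end{pmatrix}\leftrightarrow\begin{pmatrix}0&1&0\end{pmatrix}$ and the two column vectors, and turns the shape $1^\ast0^\ast1^\ast$ into $0^\ast1^\ast0^\ast$; vertical reflection merely reverses the order of the rows, which leaves the set of occurring row sub-vectors untouched and, since $\begin{pmatrix}1&0&1\end{pmatrix}$ and $\begin{pmatrix}0&1&0\end{pmatrix}$ are palindromes, also leaves the set of occurring column sub-vectors unchanged. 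Since an anti-zebra is by definition the complement of the vertical reflection of a zebra, every remaining family is the image of a split zebra under a composition of these three maps. Tracking the single surviving vector through the relevant composition then finishes each case (for instance, the anti-zebra of a horizontally split zebra keeps only $\begin{pmatrix}0\\1\\0\end{pmatrix}$, and the complement of a vertically split zebra keeps only $\begin{pmatrix}0&1&0\end{pmatrix}$); in every instance at most one of the four vectors survives.

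I expect the main obstacle to be the first step, namely deducing the column shape $1^\ast0^\ast1^\ast$ cleanly from the definition, in particular checking that the nested summand really occupies a contiguous top block of rows and the anti-nested summand a contiguous bottom block, so that the two column segments concatenate in exactly that order (here \Cref{geometry} and the geometry of the zebra should do the work). Everything after that is bookkeeping with the group generated by transpose, complement and reflection, where the only delicate points are confirming that each symmetry maps a listed class to a listed class and using the palindrome property of the two length-three vectors so that reflection neither creates nor destroys any of them.
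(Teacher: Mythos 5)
Your proposal is correct and takes essentially the same route as the paper, whose entire proof is the one-line remark that the claim ``derives from the geometry of the split zebra and anti-zebra''; your write-up simply makes that geometry explicit (rows are monotone runs $1^a0^b$ or $0^a1^b$, columns of a horizontally split zebra have shape $1^\ast0^\ast1^\ast$, and the remaining families follow by transpose, complement and reflection). The one point you flag as delicate --- that the nested summand occupies a contiguous top block of rows --- does go through, since each column of the nested (resp.\ anti-nested) summand has its $1$s in an initial (resp.\ final) segment of rows, so the horizontal-split condition forces the two blocks to be disjoint initial and final segments.
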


\begin{proof}
This derives from the geometry of the split zebra and anti-zebra.
\end{proof}

\begin{theorem}\label{th:zebra}
If $\MM(R,C)$ contains a split zebra or a split anti-zebra, then that split zebra or anti-zebra is the only element of $\MM(R,C)$ without negative checkerboards; i.e it is the unique sink in $G(R,C)$.
\end{theorem}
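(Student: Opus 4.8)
The plan is to prove the contrapositive of the uniqueness claim: any $\A\in\MM(R,C)$ with no negative checkerboard must coincide with the given split zebra (or split anti-zebra) $Z$. That a sink exists at all is free, since $G(R,C)$ is a finite directed acyclic graph by \Cref{acyclic}; and $Z$ is itself a sink, because it is a zebra or anti-zebra and hence, by the remark following \Cref{geometry}, contains no negative checkerboard. So the entire content lies in uniqueness.

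First I would collapse the four cases to one. Transposition exchanges rows and columns, carries $\MM(R,C)$ to $\MM(C,R)$, sends nested matrices to nested matrices and anti-nested to anti-nested (the defining condition ``no $01$ in any row or column'' being symmetric), hence zebras to zebras, and turns a horizontal split into a vertical one; since it fixes the set of negative checkerboards, it maps sinks to sinks. Similarly the map $b_{ij}=1-a_{n-i,j}$ (complement of the vertical reflection), which by definition carries a zebra to an anti-zebra, sends negative checkerboards to negative checkerboards and preserves splitness. Composing these, it suffices to treat a single representative, say a horizontally split zebra $Z$, whose rows partition into a nested block and an anti-nested block with no row meeting both.

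For the core step, let $\A$ be any sink and set $\M=\A-Z$. As $\A$ and $Z$ share their row and column sums, $\M$ has all row and column sums $0$ and entries in $\{-1,0,1\}$, and $\M\neq 0$ if $\A\neq Z$. Every nonzero row and column of $\M$ then carries both a $+1$ and a $-1$, so following these alternately yields an alternating cycle of cells $(i_1,c_1),(i_1,c_2),(i_2,c_2),(i_2,c_3),\dots$ on which $\M$ reads $+1,-1,+1,-1,\dots$; equivalently $Z$ reads $0,1,0,1,\dots$ and $\A$ reads $1,0,1,0,\dots$ around the cycle. If the cycle has length $4$, its four cells sit in two rows and two columns and carry a checkerboard in $Z$ and the opposite checkerboard in $\A$; whichever of the two is negative contradicts the fact that the corresponding matrix is a sink. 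Hence in this case $\A=Z$.

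The main obstacle is precisely the long cycles. When the shortest alternating cycle has length $\geq 6$, its cells no longer lie in two rows and two columns, so no single $2\times 2$ checkerboard is handed to us. This is exactly where the split hypothesis must be used, and where \Cref{no sub} enters: for a horizontally split zebra the triples $\begin{pmatrix}1&0&1\end{pmatrix}$ and $\begin{pmatrix}0&1&0\end{pmatrix}$ are forbidden in rows and $\begin{pmatrix}0&1&0\end{pmatrix}^{\!\top}$ in columns. I would use these forbidden triples to locate a sign-consistent chord of the alternating cycle, shortening it while keeping it alternating, and then induct on the cycle length down to the length-$4$ case already settled. Verifying that such a chord always exists --- and, more importantly, confirming that it is the no-mixing property of a \emph{split} zebra that excludes the chordless long cycles which, for a non-split zebra, would genuinely permit a second sink --- is the delicate part of the argument.
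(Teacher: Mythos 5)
Your setup matches the paper's: reduce by symmetry to a horizontally split zebra $Z$, form $\M$ as the difference with a putative second sink, extract an alternating cycle of $+1$s and $-1$s from the zero row/column sums, and observe that a length-$4$ cycle immediately yields a negative checkerboard in one of the two matrices (necessarily in $\A$, since $Z$ has none by \Cref{geometry}). All of that is sound and is exactly how the paper begins.

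The gap is the step you yourself flag as ``the delicate part'': you assert that the forbidden triples of \Cref{no sub} let you ``locate a sign-consistent chord'' and induct down to length $4$, but you never establish that such a chord exists, and in fact the dichotomy is not the one you describe. In the paper's argument the split hypothesis first forces the cycle to turn right at every $-1$ (via \Cref{geometry}) and forces every vertical segment to cross the split; then, at a left-turn-followed-by-right-turn configuration, one examines the coefficient at the intersection point $A$ of a row and a column of the cycle. If that coefficient is $0$ in $\A'$ it completes a negative checkerboard in $\A'$ \emph{directly} --- no chord, no shortening --- and only if it is $1$ in $\A'$ does \Cref{no sub} force it to be $1$ in $\M$, permitting the cycle to be shortened. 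So half the branches terminate without producing a shorter alternating cycle at all, and your induction as stated has no case covering them. Moreover, after all left turns are eliminated the cycle may still self-intersect, and the paper needs a second, separate case analysis (the two patterns of \Cref{spiralsbis}, again splitting on whether the coefficients at two intersection points give a negative checkerboard or allow a shortening) before concluding that the surviving cycle is a rectangle, i.e.\ a single negative checkerboard. None of this is present in your proposal; what you have is the correct skeleton with the entire load-bearing argument left as an intention.
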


\noindent (Proof in the appendix)

\begin{corollary}
Let $\mathbf{B}\in\MM(R,C)$ be a split zebra or a split anti-zebra. For all $\A\in \MM(R,C)$, $\mathbf{B}$ can be reached from $\A$ via a sequence of positive switches.
\end{corollary}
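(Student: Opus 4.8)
The plan is to deduce this corollary almost immediately from two facts already in hand: that $\mathbf{B}$ is the \emph{unique} sink of $G(R,C)$ (\Cref{th:zebra}), and that $G(R,C)$ is a finite directed acyclic graph (\Cref{acyclic}). The underlying principle is a general property of finite DAGs, namely that from any vertex, following outgoing arcs must eventually terminate at a sink. Combined with uniqueness of the sink, this forces every vertex to reach $\mathbf{B}$.

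Concretely, I would fix $\A\in\MM(R,C)$ and build a directed path greedily. Setting $\A_0=\A$, as long as the current matrix $\A_t$ has a negative checkerboard it is not a sink, hence has an outgoing arc; I pick any such arc and pass to $\A_{t+1}$, the matrix obtained by the corresponding positive switch. Iterating produces a directed walk $\A_0\to\A_1\to\A_2\to\cdots$ in $G(R,C)$. The next step is to observe that this walk must terminate: since $\MM(R,C)$ is finite and, by \Cref{acyclic}, $G(R,C)$ is acyclic, the walk cannot revisit any vertex (a repetition would close a directed cycle). Therefore the walk has finite length and ends at some $\A_N$ with no outgoing arc, i.e.\ a matrix with no negative checkerboard, which is by definition a sink of $G(R,C)$.

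It remains only to identify this sink. By \Cref{th:zebra}, $\mathbf{B}$ is the unique sink of $G(R,C)$, so necessarily $\A_N=\mathbf{B}$. The directed path $\A=\A_0\to\A_1\to\cdots\to\A_N=\mathbf{B}$ is precisely a sequence of positive switches carrying $\A$ to $\mathbf{B}$, which is exactly the assertion of the corollary.

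I do not expect any genuine obstacle here, since the substantive work (uniqueness of the sink) is carried by \Cref{th:zebra}; this corollary is a formal consequence of that theorem together with the DAG structure. The only point that must be stated with care is why the greedy walk is guaranteed to terminate at a sink rather than continuing indefinitely or stalling at a non-sink, and this is settled entirely by finiteness of $\MM(R,C)$ combined with acyclicity. A fully constructive alternative — explicitly exhibiting the switches — is possible but unnecessary, as the existence of the path is all that is claimed.
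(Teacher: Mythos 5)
Your proof is correct and is exactly the intended derivation: the paper states this corollary without proof as an immediate consequence of \Cref{th:zebra} together with the finiteness and acyclicity of $G(R,C)$ from \Cref{acyclic}. Your greedy-walk argument makes that implicit reasoning explicit, and there is nothing to add.
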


\begin{corollary}
If $\MM(R,C)$ contains the complement of a split zebra or a split anti-zebra, then it is the unique source in $G(R,C)$.
\end{corollary}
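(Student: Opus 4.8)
The plan is to exploit a complementation duality between $G(R,C)$ and $G(\bar R,\bar C)$, where $\bar R$ and $\bar C$ denote the complementary row and column sums. The key observation is that taking the complement of a matrix swaps positive and negative checkerboards, and hence reverses every arc of the oriented graph of matrices, turning sinks into sources. This lets me reduce the corollary directly to \Cref{th:zebra}.

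First I would set up the complementation map. For $\A\in\MM(R,C)$ with $p$ rows and $q$ columns, define $\bar\A$ by $\bar a_{ij}=1-a_{ij}$. Then $\bar\A\in\MM(\bar R,\bar C)$ with $\bar R=q\mathbf 1-R$ and $\bar C=p\mathbf 1-C$, and complementation is an involutive bijection between $\MM(R,C)$ and $\MM(\bar R,\bar C)$. The one computation to record is that complementing a $2\times 2$ submatrix sends a positive checkerboard to a negative one and vice versa, since each entry is flipped.

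Second I would verify that this bijection is arc-reversing. If $\A\to\A'$ is a positive switch at coordinates $(i,j,k,l)$, then at those same coordinates $\bar\A$ carries a positive checkerboard while $\bar\A'$ carries a negative one, so the positive switch there runs from $\bar\A'$ to $\bar\A$. Hence $\A\to\A'$ in $G(R,C)$ if and only if $\bar\A'\to\bar\A$ in $G(\bar R,\bar C)$; that is, complementation is a graph anti-isomorphism. Consequently it maps the sinks of $G(\bar R,\bar C)$ bijectively onto the sources of $G(R,C)$, and in particular a \emph{unique} sink to a \emph{unique} source.

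Finally I would assemble the reduction. Suppose $\MM(R,C)$ contains $\bar{\mathbf B}$, the complement of a split zebra or split anti-zebra $\mathbf B$. Then $\mathbf B\in\MM(\bar R,\bar C)$, so by \Cref{th:zebra} the matrix $\mathbf B$ is the unique sink of $G(\bar R,\bar C)$. Applying the anti-isomorphism, $\bar{\mathbf B}$ is the unique source of $G(R,C)$, as claimed. I do not anticipate a genuine obstacle: the argument rests entirely on the checkerboard-swapping property of complementation, which is immediate, and on \Cref{th:zebra}, which we may assume. The only point demanding care is the bookkeeping---confirming that the row and column sums transform as stated and that the arc-reversal is phrased in the correct direction, so that ``unique sink'' does indeed become ``unique source'' rather than the reverse.
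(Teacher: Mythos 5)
Your proposal is correct and takes essentially the same route as the paper, whose entire proof is the one-line observation that the complement of a unique sink is a unique source; you have simply spelled out the underlying arc-reversing anti-isomorphism between $G(R,C)$ and $G(\bar R,\bar C)$ and the reduction to \Cref{th:zebra}. The bookkeeping (complementation swaps positive and negative checkerboards, hence reverses arcs, and sends the row/column sums to $q\mathbf 1-R$ and $p\mathbf 1-C$) is accurate.
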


\begin{proof}
The complement of a unique sink is a unique source.
\end{proof}

\subsection{Adapting Ryser's Theorem to the Oriented Graph of Matrices}

It follows from \Cref{ryser} that the undirected version of $G(R,C)$ is connected. Having introduced an orientation naturally raises the question of when can a matrix $\A'$ be reached from matrix $\A$ via a sequence of positive switches. The remainder of this section is devoted to answering this question. We will denote by $\A\rar \A'$ that there is a directed path connecting $\A$ to $\A'$ in $G(R,C)$.

\begin{prop}\label{NC}
Let $R\in \N^p$, $C\in \N^q$, let $\A,\A' \in \MM(R,C)$ and let $\M=\A'-\A$. If $\A\rar \A'$, then $\M$ is a sum of unitary switching matrices; i.e. \begin{enumerate}[(i)] 
\item $\exists !\T\in \MM_{p-1,q-1}(\N)$: $\M=\sum_{i,k} t_{ik}\C_{i,i+1,k,k+1}$.
\end{enumerate}
\end{prop}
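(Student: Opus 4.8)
The plan is to unpack the definition of a directed path and track how the switching matrices accumulate. A directed path $\A\rar\A'$ is by definition a sequence of positive switches, so if it visits $\A=\A_0,\A_1,\dots,\A_m=\A'$ then, by the description in \Cref{sec:Pos-Neg}, the $s$-th switch has some coordinates $(i_s,j_s,k_s,l_s)$ and satisfies $\A_s-\A_{s-1}=\C_{i_s,j_s,k_s,l_s}$. Telescoping over the intermediate matrices, I would first record that
$$\M=\A'-\A=\sum_{s=1}^{m} \C_{i_s,j_s,k_s,l_s},$$
the sum running over all switches of the path.

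The second step is to expand each term into unitary switching matrices. Using the identity $\C_{i,j,k,l}=\sum_{i\le a<j,\,k\le b<l}\C_{a,a+1,b,b+1}$ from \Cref{sec:Pos-Neg}, each switch contributes $+1$ to the coefficient of $\C_{a,a+1,b,b+1}$ for every index pair $(a,b)$ in the rectangle $i_s\le a<j_s$, $k_s\le b<l_s$, which is non-empty and lies in $\{1,\dots,p-1\}\times\{1,\dots,q-1\}$ since $i_s<j_s\le p$ and $k_s<l_s\le q$. Collecting terms, $\M=\sum_{i,k}t_{ik}\,\C_{i,i+1,k,k+1}$, where $t_{ik}$ counts, over the whole path, the number of switches whose rectangle covers $(i,k)$. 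Because every switch along a directed path is positive, each such contribution is $+1$; hence each $t_{ik}$ is a non-negative integer and $\T=(t_{ik})\in\MM_{p-1,q-1}(\N)$, which gives the existence half of (i).

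For uniqueness I would invoke the final remark of \Cref{sec:Pos-Neg}: the unitary switching matrices are linearly independent and form a basis of the space of zero-sum matrices (there are exactly $(p-1)(q-1)$ of them, matching the dimension of that space). Since $\A$ and $\A'$ share the row sums $R$ and column sums $C$, their difference $\M$ has every row and column sum equal to $0$ and therefore lies in this space. The decomposition of a vector in a basis is unique, so $\T$ is the only matrix over $\R$ — and \emph{a fortiori} the only one in $\MM_{p-1,q-1}(\N)$ — satisfying $\M=\sum_{i,k}t_{ik}\C_{i,i+1,k,k+1}$. This completes (i).

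I do not anticipate a serious obstacle, as the argument is largely bookkeeping once the basis property is granted. The one point deserving care is that non-negativity of the $t_{ik}$ does not follow from the basis decomposition alone: a generic zero-sum integer matrix expands with coefficients of mixed sign, and it is precisely the hypothesis that the path is \emph{directed} (using positive switches only) that forces every contribution to be $+1$, and hence $\T\in\MM_{p-1,q-1}(\N)$. I would therefore stress that positivity of the switches is exactly the feature promoting a trivial linear-algebra identity into a genuine necessary condition for reachability in $G(R,C)$.
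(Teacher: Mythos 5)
Your argument is correct and follows the same route as the paper's proof: telescope the positive switches along the directed path, expand each switching matrix into unitary ones to get non-negative coefficients, and use the linear independence of the unitary switching matrices for uniqueness. Your additional remark that non-negativity comes from the directedness of the path (not from the basis decomposition) is a worthwhile point of emphasis that the paper's terser proof leaves implicit.
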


\begin{proof}
Each positive switch corresponds to adding a switching matrix and each switching matrix is a sum of unitary switching matrices. The unicity of $\T$ follows from the linear independence of the unitary switching matrices.
\end{proof}

\begin{remark}
~

\begin{itemize}
    \item Note that $\T$ is uniquely determined by $\M$, even when multiple switching sequences lead from $\A$ to $\A'$ (see \Cref{ex1}).
    \item Given $\M=\A'-\A$ which satisfies condition $(i)$, if we switch a negative checkerboard in $\A$ to positive, then the coefficients of $\T$ inside a rectangle corresponding to the checkerboard are all decreased by $1$. The new matrix $\M$ still satisfies condition $(i)$ if and only if the coefficients of $\T$ all remain non-negative. Reaching $\A'$ via successive switches corresponds to reducing $\T$ to $0$ in this fashion. (See examples below for more details.) 
\end{itemize}
\end{remark}

We denote by $(i)$ the necessary condition given in \Cref{NC}. Unfortunately, condition $(i)$ is not sufficient to ensure $\A\rar\A'$, as shown in \Cref{ex2}.

\begin{example}\label{ex1}
Let $\A=\begin{pmatrix}
0&0&1\\
1&0&0\\
1&1&0
\end{pmatrix}$ and $\A'=\begin{pmatrix}
1&0&0\\
0&1&0\\
1&0&1
\end{pmatrix}$. There are two directed paths from $\A$ to $\A'$: $\A\rar\begin{pmatrix}
1&0&0\\
0&0&1\\
1&1&0
\end{pmatrix}\rar\A'$ and $\A\rar\begin{pmatrix}
0&1&0\\
1&0&0\\
1&0&1
\end{pmatrix}\rar\A'$. In the first case, we add the switching matrices $\C_{1,2,1,3}=\begin{pmatrix}
1&0&-1\\
-1&0&1\\
0&0&0
\end{pmatrix}$ followed by $\C_{2,3,2,3}=\begin{pmatrix}
0&0&0\\
0&1&-1\\
0&-1&1
\end{pmatrix}$. In the second case, we add $\C_{1,3,2,3}=\begin{pmatrix}
0&1&-1\\
0&0&0\\
0&-1&1
\end{pmatrix}$ and $\C_{1,2,1,2}=\begin{pmatrix}
1&-1&0\\
-1&1&0\\
0&0&0
\end{pmatrix}$. We have $\M=\A'-\A=\begin{pmatrix}
1&0&-1\\
-1&1&0\\
0&-1&1
\end{pmatrix}=\C_{1,2,1,3}+\C_{2,3,2,3}=\C_{1,3,2,3}+\C_{1,2,1,2}$. In both of these sums, the first switching matrix can be split into two unitary switching matrices. So $\M$ is the sum of three unitary switching matrices: $\M=\C_{1,2,1,2}+\C_{1,2,2,3}+\C_{2,3,2,3}$. Thus $\M$ satisfies condition $(i)$ with $\T=\begin{pmatrix}
1&1\\
0&1
\end{pmatrix}$. 
\end{example}

\begin{example}\label{ex2}
Let $\A=\begin{pmatrix}
0 & 0 & 0 & 1 \\
1 & 1 & 0 & 1 \\
1 & 0 & 1 & 1 \\
1 & 0 & 0 & 0
\end{pmatrix}$ and $\A'=\begin{pmatrix}
1 & 0 & 0 & 0 \\
1 & 0 & 1 & 1 \\
1 & 1 & 0 & 1 \\
0 & 0 & 0 & 1
\end{pmatrix}$.
We have $\M=\A'-\A=\begin{pmatrix}
1 & 0 & 0 & -1 \\
0 & -1 & 1 & 0 \\
0 & 1 & -1 & 0 \\
-1 & 0 & 0 & 1
\end{pmatrix}$, which satisfies condition $(i)$ with \\
$\T=\begin{pmatrix}
1 & 1 & 1  \\
1 & 0 & 1  \\
1 & 1 & 1
\end{pmatrix}$. Matrix $\A$ has only one negative checkerboard, with coordinates $(1,4,1,4)$. After switching it to positive, reaching $\A'$ now requires a negative switch of coordinates $(2,3,2,3)$. Indeed, switching that checkerboard decrements all coefficients of $T$, leaving a -1 in the centre. Hence $\A\not\rar \A'$. While the matrix $\M$ is the sum of eight unitary switching matrices, and can be written as a sum of (not all unitary) switching matrices in many ways, none of these sums includes $\C_{1,4,1,4}$. So none of the switches appearing in these sums is feasible in $\A$. 
\end{example}

\begin{lemma}\label{m_ij}
Let $\A,\A' \in \MM(R,C)$ and $\M=\A'-\A$ such that there exists $\T\in \MM_{p-1,q-1}(\N)$ such that $\M=\sum_{i,k} t_{ik}\C_{i,i+1,k,k+1}$ ~$(i)$. Let us extend $T$ so that $t_{ij}=0$ if $i=0$ or $p$, or if $j=0$ or $q$. Then, for all $i,j$, we have:  $$m_{ij}=t_{ij}+t_{i-1,j-1}-t_{i,j-1}-t_{i-1,j}.$$ 
\end{lemma}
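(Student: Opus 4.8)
The plan is to prove the identity by a direct entry-by-entry expansion of the sum, reading off exactly which unitary switching matrices contribute to a prescribed entry $m_{ij}$. To keep the fixed indices $i,j$ from clashing with the summation indices, I would write the hypothesis as $\M=\sum_{a,b} t_{ab}\,\C_{a,a+1,b,b+1}$ and then extract the $(i,j)$ coefficient of both sides.

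First I would recall from \Cref{sec:Pos-Neg} that the unitary switching matrix $\C_{a,a+1,b,b+1}$ has precisely four non-zero entries: $+1$ at positions $(a,b)$ and $(a+1,b+1)$, and $-1$ at positions $(a,b+1)$ and $(a+1,b)$. Consequently, for a fixed entry $(i,j)$, the matrix $\C_{a,a+1,b,b+1}$ contributes to $m_{ij}$ exactly in the cases where $(i,j)$ coincides with one of these four positions. Solving each coincidence for $(a,b)$ gives four contributions: $(a,b)=(i,j)$ yields $+t_{ij}$; $(a,b)=(i-1,j-1)$ yields $+t_{i-1,j-1}$; $(a,b)=(i,j-1)$ yields $-t_{i,j-1}$; and $(a,b)=(i-1,j)$ yields $-t_{i-1,j}$. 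Summing these gives the claimed formula
$$m_{ij}=t_{ij}+t_{i-1,j-1}-t_{i,j-1}-t_{i-1,j}.$$

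The only point requiring care is the behaviour at the boundary of the matrix, which is precisely what the stated zero-extension of $\T$ (setting $t_{ab}=0$ whenever $a\in\{0,p\}$ or $b\in\{0,q\}$) is designed to handle: when $(i,j)$ lies on an edge or corner, some of the four candidate pairs $(a,b)$ fall outside the legal range $1\le a\le p-1$, $1\le b\le q-1$, and the zero-extension ensures the corresponding terms vanish automatically. For instance, at the top-left corner the formula collapses to $m_{11}=t_{11}$, consistent with only $\C_{1,2,1,2}$ reaching position $(1,1)$. I expect no genuine obstacle here: the entire content is the bookkeeping of which of the four corners of each unitary checkerboard lands on $(i,j)$, and the resulting operator can be recognised as the discrete mixed second difference of $\T$, the natural inverse of the partial-summation that reconstructs $\M$ from $\T$.
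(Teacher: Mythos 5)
Your proposal is correct and follows exactly the same approach as the paper's proof, which simply notes that each unitary switching matrix has four non-zero coefficients so exactly four terms of the sum contribute to $m_{ij}$; you have merely spelled out the bookkeeping and the boundary convention in full detail.
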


\begin{proof}
Each switching matrix has four non-zero coefficients. So in condition $(i)$, exactly four terms in the sum contribute to $m_{ij}$.
\end{proof}

\begin{figure}[ht]
    \begin{center}
    \begin{tikzpicture}[scale=1.5]
        \draw [gray] (0,0) grid (2,2); 
         
        \node [above] at (0,.80) {$m_{i,j-1}$};
        \node [above] at (1,.80) {$m_{i,j}$};
        \node [above] at (2,.80) {$m_{i,j+1}$};
        \node [above] at (0,1.80) {$m_{i-1,j-1}$};
        \node [above] at (1,1.80) {$m_{i-1,j}$};
        \node [above] at (2,1.80) {$m_{i-1,j+1}$};
        \node [above] at (0,-.20) {$m_{i+1,j-1}$};
        \node [above] at (1,-.20) {$m_{i+1,j}$};
        \node [above] at (2,-.20) {$m_{i+1,j+1}$};
        
        \node [above] at (0.5,0.3) {$t_{i,j-1}$};
        \node [above] at (0.5,1.3) {$t_{i-1,j-1}$};
        \node [above] at (1.5,0.3) {$t_{i,j}$};
        \node [above] at (1.5,1.3) {$t_{i-1,j}$};
        
    \end{tikzpicture}   
    
    \medskip
    
    $m_{ij}=t_{ij}+t_{i-1,j-1}-t_{i,j-1}-t_{i-1,j}$
    \end{center}
    \caption{Relation between coefficients of $\M$ and $\T$}
    \label{fig.mij}
\end{figure}

If $\M$ satisfies $(i)$, we create a $(p-1)\times(q-1)$ grid, with the coefficients of $\T$ inside the cells and the coefficients of $\M$ at the corners (see \Cref{fig.mij}). Each cell corresponds to a unitary switching matrix and each coefficient of $\T$ indicates how many times its cell needs to be switched in order to go from $\A$ to $\A'$. Let $m=\max t_{ik}$. For $1\leq i\leq m$, we define $\P_i(\M)$ as the reunion of the cells of the grid with coefficients at least $i$ (see \Cref{ex3}). Note that $\P_i(\M)$ is formed by one or several polyominoes. We recall that a \emph{polyomino} is a shape formed by a finite number of orthogonally connected cells in a square grid.

\begin{example}\label{ex3}
Let $\A=\begin{pmatrix}
0 & 0 & 1 & 1 \\
0 & 0 & 1 & 1 \\
1 & 1 & 0 & 0 \\
1 & 1 & 0 & 0
\end{pmatrix}$ and $\A'=\begin{pmatrix}
1 & 1 & 0 & 0 \\
1 & 1 & 0 & 0\\
0 & 0 & 1 & 1 \\
0 & 0 & 1 & 1 
\end{pmatrix}$.
We have $\M=\A'-\A=\begin{pmatrix}
1 & 1 & -1 & -1 \\
1 & 1 & -1 & -1 \\
-1 & -1 & 1 & 1 \\
-1 & -1 & 1 & 1
\end{pmatrix}$, which satisfies condition $(i)$ with \\ $\T=\begin{pmatrix}
1 & 2 & 1  \\
2 & 4 & 2  \\
1 & 2 & 1
\end{pmatrix}$. Thus, $\P_1(\M)$ is a $3\times 3$ square, $\P_2(\M)$ is an X-shaped pentomino (see \Cref{fig.polyomino}) and $\P_3(\M)$ and $\P_4(\M)$ are monominoes surrounding only the central cell. Note that we have $\A\rar\A'$ and going from $\A$ to $\A'$ requires at minimum four switches.

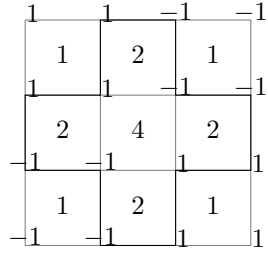
\begin{figure}[ht]
    \begin{center}
    \begin{tikzpicture}[scale=1]
        \draw [gray] (0,0) grid (3,3);
        \draw (1,0)--(2,0)--(2,1)--(3,1)--(3,2)--(2,2)--(2,3)--(1,3)--(1,2)--(0,2)--(0,1)--(1,1)--(1,0);
         
        \node [above] at (0,.85) {$-1$};
        \node [above] at (1,.85) {$-1$};
        \node [above] at (2.1,.85) {$1$};
        \node [above] at (3.1,.85) {$1$};
        \node [above] at (0.11,1.85) {$1$};
        \node [above] at (1.1,1.85) {$1$};
        \node [above] at (2,1.85) {$-1$};
        \node [above] at (3,1.85) {$-1$};
        \node [above] at (0,-.15) {$-1$};
        \node [above] at (1,-.15) {$-1$};
        \node [above] at (2.1,-.15) {$1$};
        \node [above] at (3.1,-.15) {$1$};
        \node [above] at (0.1,2.85) {$1$};
        \node [above] at (1.1,2.85) {$1$};
        \node [above] at (2,2.85) {$-1$};
        \node [above] at (3,2.85) {$-1$};
        
        \node [above] at (0.5,0.3) {$1$};
        \node [above] at (0.5,1.3) {$2$};
        \node [above] at (1.5,0.3) {$2$};
        \node [above] at (1.5,1.3) {$4$};
        \node [above] at (2.5,0.3) {$1$};
        \node [above] at (2.5,1.3) {$2$};
        \node [above] at (0.5,2.3) {$1$};
        \node [above] at (1.5,2.3) {$2$};
        \node [above] at (2.5,2.3) {$1$};
    \end{tikzpicture}   
    
    \end{center}
    \caption{Polyomino $\P_2(\M)$ of \Cref{ex3}. Coefficients of $\M$ are placed at the intersections of the grid and coefficients of $\T$ are inside the cells.}
    \label{fig.polyomino}
\end{figure}

\end{example}

\begin{lemma}\label{no diag touching}
If $\P_i(\M)$ contains two diagonally adjacent cells, then at least one of the two common neighbouring cells is also in $\P_i(\M)$.
\end{lemma}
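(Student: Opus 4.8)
The plan is to argue locally inside the unique $2\times 2$ block of cells that contains the two diagonally adjacent cells. Label the four $\T$-coefficients of this block as
$$\begin{pmatrix} t_{r-1,s-1} & t_{r-1,s} \\ t_{r,s-1} & t_{r,s} \end{pmatrix},$$
choosing the labelling so that the two cells belonging to $\P_i(\M)$ are the diagonal pair $t_{r-1,s-1}$ and $t_{r,s}$. Then the two common neighbouring cells are precisely $t_{r-1,s}$ and $t_{r,s-1}$, and the single grid point shared by all four cells carries the entry $m_{rs}$ of $\M$.

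First I would apply \Cref{m_ij} at this point, which gives
$$m_{rs} = t_{rs} + t_{r-1,s-1} - t_{r,s-1} - t_{r-1,s}.$$
The key structural feature is the checkerboard sign pattern: the two cells on the main diagonal enter with a $+$ sign and the two on the anti-diagonal with a $-$ sign. Combined with the fact that $\M=\A'-\A$ is a difference of two $(0,1)$-matrices, so that every entry of $\M$ lies in $\{-1,0,1\}$ and in particular $m_{rs}\leq 1$, this is all the input the argument needs.

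Next I would argue by contradiction. Suppose the diagonal pair satisfies $t_{r-1,s-1}\geq i$ and $t_{r,s}\geq i$, but that neither common neighbour lies in $\P_i(\M)$, i.e. $t_{r-1,s}\leq i-1$ and $t_{r,s-1}\leq i-1$ (recall all coefficients of $\T$ are integers). Substituting these into the relation above yields
$$m_{rs} = t_{rs} + t_{r-1,s-1} - t_{r,s-1} - t_{r-1,s} \geq i + i - (i-1) - (i-1) = 2,$$
contradicting $m_{rs}\leq 1$. Hence at least one of the two common neighbours has coefficient at least $i$ and so lies in $\P_i(\M)$. The case in which the diagonally adjacent cells form the anti-diagonal of the block is entirely symmetric: after relabelling, the same relation forces $m_{rs}\leq -2$, contradicting $m_{rs}\geq -1$.

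I do not expect a genuine obstacle here, since the whole argument reduces to the local formula of \Cref{m_ij} together with the elementary observation that entries of a difference of $(0,1)$-matrices are confined to $\{-1,0,1\}$. The only point that needs care is the sign bookkeeping: verifying that a diagonally adjacent pair always consists of the two like-signed cells of the block, so that the two surviving $+$ contributions together with the two suppressed $-$ contributions really do push $m_{rs}$ beyond the range $\{-1,0,1\}$.
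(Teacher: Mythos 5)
Your argument is correct and is essentially identical to the paper's own proof: both apply the local formula of \Cref{m_ij} to the $2\times 2$ block containing the diagonal pair, assume the two common neighbours lie outside $\P_i(\M)$, and derive $m_{rs}\geq 2$ (or $\leq -2$ in the anti-diagonal case), contradicting the fact that entries of $\M=\A'-\A$ lie in $\{-1,0,1\}$. Nothing is missing.
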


\begin{proof}
Say the cells of coordinates $(i,j)$ and $(i-1,j-1)$ are in $\P_i(\M)$ and the cells of coordinates $(i-1,j)$ and $(i,j-1)$ are not in $\P_i(\M)$ (or vice versa). Then, $t_{ij},t_{i-1,j-1}\geq i$ (resp. $\leq i-1$) and $t_{i-1,j},t_{i,j-1}\leq i-1$ (resp. $\geq i$). It follows from \Cref{m_ij} that $m_{ij}\geq 2$ (resp. $\leq -2$). Since $\M=\A'-\A$ and $\A$ and $\A'$ have binary coefficients, the coefficients of $\M$ are in $\{-1,0,1\}$. This is impossible.
\end{proof}

\begin{remark}\label{touching polyo}
This means that $\P_i(\M)$ cannot contain two polyominoes connected by a corner.
\end{remark}

\begin{theorem}\label{SC}
Let $R\in \N^p$, $C\in \N^q$, let $\A,\A' \in \MM(R,C)$ and let $\M=\A'-\A$. If $\M$ satisfies the following conditions:
\begin{enumerate}[(i)]
    \item $\exists \T\in \MM_{p-1,q-1}(\N)$: $\M=\sum_{i,k} t_{ik}\C_{i,i+1,k,k+1}$,
    \item For all $i$, each connected component of $\P_i(\M)$ is simply connected,
    \item $|i'-i|\leq 1$ and $|k'-k|\leq 1 \imp |t_{i'k'}-t_{ik}|\leq 1$,
\end{enumerate}
then $\A\rar \A'$.
\end{theorem}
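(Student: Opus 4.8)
The plan is to argue by induction on the total number of switches $N=\sum_{i,k}t_{ik}$, showing that whenever $N>0$ one can perform a \emph{single} valid positive switch on $\A$ that strictly decreases $N$ and produces a new difference matrix still satisfying (i)--(iii); iterating then drives $\T$ to $0$, i.e. transforms $\A$ into $\A'$, which is exactly a directed path $\A\rar\A'$. Using the grid picture and the second remark after \Cref{NC}, a positive switch of coordinates $(i,j,k,l)$ amounts to subtracting $1$ from every cell of $\T$ in the rectangle $[i,j)\times[k,l)$, so the task is to exhibit such a rectangle that (a) corresponds to a negative checkerboard genuinely present in $\A$, and (b) when decremented leaves all three conditions intact.

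First I would work at the top layer. Put $m=\max t_{ik}$ and pick a connected component $P$ of $\P_m(\M)$. By (ii) $P$ is simply connected, by \Cref{no diag touching} and \Cref{touching polyo} distinct components of $\P_m(\M)$ are even corner-disjoint, and by (iii) every cell orthogonally adjacent to $P$ but outside it has value exactly $m-1$. I would then dispatch the easy half: if some rectangle $R\subseteq P$ with $P\setminus R$ simply connected is lowered by one, the resulting $\T'$ still satisfies all three conditions. Indeed (i) holds since every cell of $P\subseteq\P_1(\M)$ had value $\ge 1$; (iii) holds because the lowered cells now sit flush at the surrounding level $m-1$; and (ii) holds because $\P_m(\M)$ merely loses $R$ (staying simply connected by the choice of $R$, and corner-disjoint from the other components) while every $\P_j(\M)$ with $j\le m-1$ is unchanged as a set. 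So the step reduces to producing one switch realising such a decrement.

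The hard part is \emph{validity}: the rectangle must be a negative checkerboard actually present in $\A$, and for this $\M$ alone does not suffice. At the reflex (interior) corners of $P$ one has $m_{ij}=0$, so $\M$ leaves the corresponding entries of $\A$ undetermined, and, as the examples illustrate, which rectangle is switchable genuinely depends on $\A$. To seed the construction I would take the top-left cell $(i,k)$ of $P$ (minimal row, then minimal column in that row): its neighbours $(i-1,k)$, $(i,k-1)$ and $(i-1,k-1)$ all lie outside $P$, so \Cref{m_ij} forces $m_{ik}=1$, hence $a_{ik}=0$ and $a'_{ik}=1$. Starting from this guaranteed $0$-entry of $\A$, I would follow the genuine $1$-entries of $\A$ along row $i$ and down column $k$ to close up a negative checkerboard $(i,j,k,l)$ whose cell-rectangle $R$ remains inside $P$. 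Here condition (iii) controls the shape of the plateau and keeps its boundary flat, while simple-connectivity (ii) is exactly what stops the search from escaping $P$ or being blocked by a hole --- the ``donut'' obstruction that (ii) forbids.

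I expect this existence-of-a-good-rectangle step to be the crux; everything else is the bookkeeping already described. The single-rectangle framing is what makes the argument clean: because one switch lowers a whole rectangle at once and the end state is verified directly, I never have to reason about intermediate matrices. A variant to keep in reserve, should a single admissible $R$ be awkward to produce in general, is to peel $P$ as a disjoint union of rectangles processed in an order dictated by $\A$; there the intermediate matrices need not satisfy (ii)--(iii), and only the matrix reached after $P$ has been fully lowered must --- which is all the induction consumes.
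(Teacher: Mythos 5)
Your induction frame and bookkeeping (peel the top plateau $\P_m(\M)$, note that lowering a rectangle of cells by $1$ keeps $(i)$--$(iii)$ and strictly decreases $\sum t_{ik}$, and that components of $\P_m$ are corner-disjoint with all adjacent cells at level $m-1$) coincide with the paper's, and your observation that the top-left cell of a component forces $m_{ik}=1$ is correct. But the step you yourself flag as the crux --- exhibiting a switchable rectangle --- is exactly the content of the theorem, and the proposal leaves it as an unproven search heuristic (``follow the genuine $1$-entries of $\A$ along row $i$ and down column $k$''). Nothing guarantees this search closes up into a negative checkerboard of $\A$ whose cell-rectangle stays inside $P$ and leaves $P\setminus R$ simply connected: the entries of $\A$ at interior points of the plateau are invisible to $\M$ (there $m_{ij}=0$), so the walk can dead-end or exit $P$, and you have supplied no argument to the contrary.

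The paper closes this gap with two ingredients your proposal lacks. First, a geometric lemma (\Cref{shadok}): the contour of any simply connected polyomino exhibits one of three explicit motifs, each of which pins down a rectangle $ABCD\subseteq\P_m(\M)$ whose corner values in $\M$ are forced to be $1$ at the top-left and bottom-right and $-1$ at the other two --- \emph{except} for one corner in motif 3, which carries a $0$ and is therefore undetermined in $\A$. Second, and decisively, the paper does \emph{not} insist on switching a negative checkerboard of $\A$: when the undetermined corner makes $ABCD$ fail to be a negative checkerboard of $\A$, it is automatically a positive checkerboard of $\A'$, and the paper performs the switch at that end of the path instead (building the directed path from both ends toward the middle). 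Your formulation commits to one-sided switches on $\A$ only, which is precisely the case the $0$-corner defeats; the fallback you ``keep in reserve'' (peeling $P$ as a union of rectangles ordered by $\A$) does not address this either, since the obstruction is the indeterminacy of $\A$ on the plateau, not the ordering of rectangles. As written, the proposal is an outline whose central existence step is missing and whose one-sided framing would need to be abandoned to complete it.
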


\noindent (Proof in the appendix)
\bigskip

Note that the third condition says that the coefficients inside orthogonally or diagonally adjacent cells must be equal or successive integers. Condition $(iii)$ in \Cref{SC} is probably unnecessary and it seems that if for some $i$, $\P_i(\M)$ is not simply connected, it should be possible to create an instance where $\A\not\rar\A'$, like in \Cref{ex2}. Combining these two observations yields the following conjecture:

\begin{conj}
Let $R\in \N^p$, $C\in \N^q$ and let $\M\in \MM_{p,q}(\{-1,0,1\})$. We have $\A\rar \A'$ for all $\A,\A'\in\MM(R,C)$ such that $\A'-\A=\M$ if and only if $\M$ satisfies:
\begin{enumerate}[(i)]
    \item $\exists \T\in \MM_{p-1,q-1}(\N)$: $\M=\sum_{i,k} t_{ik}\C_{i,i+1,k,k+1}$,
    \item For all $i$, each connected component of $\P_i(\M)$ is simply connected.
\end{enumerate}
\end{conj}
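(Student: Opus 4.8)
The statement is a biconditional, so the plan is to treat the two implications separately and to measure each against what is already available: \Cref{NC} shows $(i)$ is necessary, while \Cref{SC} shows $(i)$–$(iii)$ are sufficient. The conjecture therefore amounts to two upgrades: removing $(iii)$ from the sufficiency direction, and promoting $(ii)$ to a necessary condition. Throughout I assume $\M$ is realizable, i.e. that some pair $\A,\A'\in\MM(R,C)$ with $\A'-\A=\M$ exists; otherwise the left-hand side is vacuously true and must be excluded by convention.

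For necessity, condition $(i)$ is immediate: fix any realizing pair; since $\A\rar\A'$ by hypothesis, \Cref{NC} yields $(i)$. The real content is the necessity of $(ii)$, which I would establish by contrapositive, generalizing \Cref{ex2}. Suppose some $\P_{i_0}(\M)$ is not simply connected, and fix an innermost hole $H$: a bounded complementary region whose cells all carry $\T$-value strictly below $i_0$ while being encircled by cells of value at least $i_0$. The goal is to produce a single adversarial starting matrix $\A$ with $\A\not\rar\A'$. Since $\M$ forces the entries of $\A$ only on its support (where $m_{ij}=\pm1$) and leaves the rest free, I would choose those free entries so that every negative checkerboard of $\A$ corresponds to a switching rectangle straddling $H$. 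A switch decrements a solid axis-aligned rectangle of cells, and any rectangle meeting the encircling ring on two opposite sides of $H$ must contain a cell of $H$; performing such a switch therefore decrements a hole cell already sitting at its target value, driving a coefficient of the running $\T$ negative and thus destroying $(i)$ — after which $\A'$ is unreachable by further positive switches. Turning ``the ring cannot be unwound without crossing $H$'' into a theorem is exactly where a genuine topological obstruction is required, presumably a discrete winding or flux invariant attached to $H$.

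For sufficiency I would argue by induction on $N=\sum_{i,k}t_{ik}$, the total number of switches prescribed by $\M$, the base case $N=0$ giving $\A=\A'$. For the inductive step I need, from an arbitrary valid $\A$ with $\T\neq0$ satisfying $(i),(ii)$, a feasible positive switch whose new difference $\M_1=\A'-\A_1$ still satisfies $(i)$ and $(ii)$; then $N$ strictly decreases and the hypothesis applied to $\A_1$ finishes the path. The natural candidate is a decrement supported at an extreme corner of the top level $\P_m(\M)$ with $m=\max t_{ik}$: by \Cref{no diag touching} and simple connectivity such a corner is well-behaved, and a corner decrement cannot open a hole, so $(ii)$ is preserved, while decrementing only cells of value $\ge 1$ preserves $(i)$. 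The delicate point, and the main obstacle of this direction, is \emph{feasibility}: one can check that $(i)$ and $(ii)$ do not force $(iii)$ — the level sets may jump by more than one between adjacent cells — so the smooth layer-by-layer peeling underlying \Cref{SC} is unavailable, and the negative checkerboard needed to realize the chosen switch need not be forced by $\M$; the adversarially chosen free entries of $\A$ may hide it.

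The heart of both directions is therefore the same existence question: \emph{does every valid $\A$ with $\T\neq0$ satisfying $(i),(ii)$ admit a negative checkerboard whose switch preserves $(i)$ and $(ii)$?} I expect this lemma to be the crux of the conjecture, and I would attack it by recasting reachability entirely in terms of $\T$ and the sign pattern of $\M$, encoding feasible switches as negative checkerboards and proving by a flow or parity argument on the grid that a simply-connected level structure always offers an outermost feasible decrement — while the configurations in which no such decrement can be guaranteed are precisely those that violate $(ii)$, which is what ties the two directions together.
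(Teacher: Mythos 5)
This statement is posed in the paper as an open \emph{conjecture}: the authors prove only that $(i)$ is necessary (\Cref{NC}) and that $(i)$--$(iii)$ together are sufficient (\Cref{SC}), and they offer no proof of the biconditional. So there is no proof in the paper to compare yours against, and what you have written is not a proof either but a research plan; you say so yourself at the two decisive points. Concretely, the gaps are these. For the necessity of $(ii)$ you want an adversarial $\A\in\MM(R,C)$ with $\A'-\A=\M$ in which every negative checkerboard's switching rectangle crosses the hole $H$, so that any first positive switch drives a coefficient of $\T$ negative and, by \Cref{NC}, kills reachability. But the ``free'' entries of $\A$ are not freely choosable: they are constrained by the row and column sums $R,C$ and by the requirement that the entries on the support of $\M$ be forced, and you give no construction showing such an $\A$ exists for an arbitrary non-simply-connected level set, nor the ``discrete winding or flux invariant'' you invoke to certify that rectangles straddling the ring must hit $H$. \Cref{ex2} is a single $4\times4$ instance, not a template.

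For sufficiency, the obstacle is exactly the one you name and do not overcome: the paper's proof of \Cref{SC} uses $(iii)$ to force the cells adjacent to $\P_m(\M)$ to carry coefficient $m-1$, which via \Cref{m_ij} pins the entries of $\M$ at the corners of the motif rectangle to $\pm1$ and hence guarantees a negative checkerboard in $\A$ (or a positive one in $\A'$) at that rectangle. Once $(iii)$ is dropped, adjacent level sets can differ by more than one, the corner entries of $\M$ can be $0$, and the switch you want to perform need not correspond to any checkerboard actually present in $\A$; your ``extreme corner of the top level'' candidate therefore has no feasibility guarantee, and the induction on $\sum t_{ik}$ cannot start. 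Your closing observation --- that both directions reduce to the single lemma ``every valid $\A$ with $\T\neq0$ satisfying $(i),(ii)$ admits a feasible switch preserving $(i),(ii)$'' --- is a sensible reformulation of the conjecture, but it is a restatement of the difficulty, not a resolution of it. As it stands the proposal establishes nothing beyond what \Cref{NC} and \Cref{SC} already give.
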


\section{Spectral Radius and Second Zagreb index}\label{sec:spectral radius}

\subsection{Extrema}

In this section, we consider the class of adjacency matrices of simple graphs with fixed degree distribution $\D$. We investigate how the topology of simple graphs affects their spectral radius. To this purpose, we analyse the effect of successive positive checkerboard switches on the spectral radius $\ll$ as well as on the second Zagreb index $M_2$, which was used in \cite{abdo2014estimating} to create an approximation of $\ll$: $Z_2=\sqrt\frac{M_2}m$.
Amongst all the existing approximations for $\ll$, we chose to focus on $Z_2$ because we are able to map very precisely how it varies throughout the set of adjacency matrices of fixed degree distribution. In contrast, most other commonly used approximations are determined by the degree distribution, meaning that they are invariant under checkerboard switching. \cite{pastor2018eigenvector,restrepo2007approximating}. 

Let $G=(V,E)$ be a simple graph with $|V|=n$ and $|E|=m$ and adjacency matrix $\A$. Let $d_i$ denote the degree of vertex $i\in V$. The \emph{spectral radius} of $G$, denoted by $\ll(G)$ or $\ll(\A)$, is the largest eigenvalue of $\A$. The \emph{first} and \emph{second Zagreb indices} are defined as $M_1=\sum_{i\in V} d_i^2$ and $M_2=\sum_{ij\in E}d_id_j$ and we have $Z_1=\sqrt\frac{M_1}n$ and $Z_2=\sqrt\frac{M_2}m$. Note that $Z_1$ is the quadratic average over $V$ of the degrees, while $Z_2$ is the quadratic average over $E$ of $\sqrt{d_id_j}$. It is well-known that $\ll\geq Z_1$, with equality if $G$ is a regular graph \cite{elphick2015relations}. Thus, it is the heterogeneity of $\D$ that allows $\ll$ and $Z_2$ to vary among the class of graphs with degree distribution $\D$. Also, for a fixed degree distribution, $M_2$ is proportional to the \emph{degree assortativity coefficient} $r$,  which is the standard Pearson coefficient for correlation between the degrees \cite{Li2005supplemental}:
$$r=\frac{M_2-(\sum_{i=1}^n\frac 12d_i^2)^2/m}{\sum_{i=1}^n\frac 12 d_i^3-(\sum_{i=1}^n\frac 12d_i^2)^2/m}.$$

Let $\D\in \N^n$ be the degree distribution of a simple graph. We now consider the class $\MM(\D)$ of adjacency matrices of simple graphs of order $n$ with fixed degree distribution $\D$; i.e. symmetric binary matrices with zeroes on the diagonal and row and column sums $\D$. Due to the absence of loops, we only consider, in this section, checkerboards with no coefficient on the diagonal. Also, given the symmetry of the matrices, checkerboards always come in symmetric pairs which are always switched together. In terms of graphs, a checkerboard corresponds to a 4-cycle of alternating edges and non-edges. Switching a checkerboard means switching the edges and non-edges. We define the \emph{symmetric switching matrix} $\CC_{ijkl}=\C_{ijkl}\;+\;^t\C_{ijkl}$. Note that we now require the coordinates $(i,j,k,l)$ to be all different. The distinction between positive and negative checkerboards for graphs is dependent on an ordering of the vertices. We will always sort the vertices by non-increasing degree; i.e $\D$ is non-increasing. A positive (resp. negative) switch of coordinates $(i,j,k,l)$ now corresponds to adding (resp. subtracting) $\CC_{ijkl}$ to the adjacency matrix. We define $G(\D)$ as the directed graph with vertex set $\MM(\D)$ and an arc joins $\A$ to $\A'$ when $\A'$ can be obtained from $\A$ by a positive switch.

\begin{prop}
The graph $G(\D)$ is connected and acyclic.
\end{prop}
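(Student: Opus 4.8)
The statement to prove is that $G(\D)$ is connected and acyclic, which is the exact symmetric-matrix analogue of \Cref{acyclic}. The plan is to mirror the proof of \Cref{acyclic} as closely as possible, adapting the acyclicity argument to the symmetric switching operation and invoking the appropriate version of Ryser's theorem for the connectedness. The two halves are independent: acyclicity follows from exhibiting a quantity that strictly increases along every arc, and connectedness follows from the fact that the underlying undirected graph is already known to be connected.

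\textbf{Acyclicity.} For the acyclicity half, I would reuse the monovariant $I(\A)=\sum_{i,j}ija_{ij}$ introduced in the proof of \Cref{acyclic}. A positive switch of coordinates $(i,j,k,l)$ now corresponds to adding the symmetric switching matrix $\CC_{ijkl}=\C_{ijkl}+{}^t\C_{ijkl}$ rather than a single $\C_{ijkl}$. Since $I$ is linear in the matrix entries, the change in $I$ is the sum of the contributions of the two component switching matrices:
$$I(\A')-I(\A)=I(\CC_{ijkl})=I(\C_{ijkl})+I({}^t\C_{ijkl})=2(i-j)(k-l)>0,$$
using that $i<j$ and $k<l$ so that $(i-j)(k-l)>0$, and that transposition sends the contribution $ijc_{ij}$ to $jic_{ji}$, leaving the value of $I$ on a switching matrix unchanged. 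Because $I$ strictly increases along every arc, it strictly increases along every directed path, so no directed cycle can exist and $G(\D)$ is acyclic.

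\textbf{Connectedness.} For connectedness, the argument is again that a sequence of positive or negative symmetric switches joining $\A$ to $\A'$ is exactly an undirected path in $G(\D)$, so it suffices to know that any two matrices in $\MM(\D)$ are linked by such a sequence. This is the symmetric-graph form of \Cref{ryser}: the interchange graph on adjacency matrices with fixed degree sequence is connected under symmetric double-switches. I would cite the relevant result (the symmetric analogue of Ryser's theorem, e.g. the result establishing connectivity of the switch Markov chain on labelled graphs with fixed degree sequence) and note that each such switch is, up to orientation, an arc of $G(\D)$.

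\textbf{Main obstacle.} The only genuine subtlety, and the point I would be careful about, is the connectedness claim. Ryser's theorem as stated in \Cref{ryser} is for general $(0,1)$-matrices with prescribed row and column sums and does not immediately guarantee that symmetric matrices can be connected using only \emph{symmetric} switches that preserve symmetry, zero diagonal, and the graph structure at every intermediate step. The acyclicity half is entirely routine once the sign of $I(\CC_{ijkl})$ is checked. So the real content is justifying that the symmetric switching operation alone suffices to connect $\MM(\D)$; I would resolve this by appealing to the known connectivity result for the graphical switch chain rather than re-deriving it, since that is a standard fact in the literature on sampling graphs with fixed degree sequence.
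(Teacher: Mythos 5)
Your proposal is correct and follows essentially the same route as the paper: acyclicity is deduced from the asymmetric case (your explicit computation $I(\CC_{ijkl})=2(i-j)(k-l)>0$ is exactly why that deduction works), and connectedness is obtained by citing the known connectivity of the symmetric switch chain, for which the paper cites Berge (1970). You are also right to flag that \Cref{ryser} alone does not cover the symmetric case; the paper handles this precisely by the external citation you propose.
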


\begin{proof}
The connectedness is shown in \cite{berge1970graphes}. The acyclicity follows from the acyclicity in the asymmetric case. 
\end{proof}

\begin{lemma}\label{M2 switch}
Let $\A,\A'\in\MM(\D)$ be such that $\A'$ can be obtained from $\A$ by a positive switch of coordinates $(i,j,k,l)$. We have $M_2(\A')-M_2(\A)=(d_i-d_j)(d_k-d_l)$.
\end{lemma}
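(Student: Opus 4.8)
The plan is to compute the change in $M_2$ directly from its definition $M_2 = \sum_{ij \in E} d_i d_j$ by tracking exactly which edges appear and disappear under the switch. Since a positive switch of coordinates $(i,j,k,l)$ adds $\CC_{ijkl} = \C_{ijkl} + {}^t\C_{ijkl}$ to the adjacency matrix, and $\C_{ijkl}$ has $c_{ik}=c_{jl}=1$ and $c_{il}=c_{kj}=-1$, the effect on the graph is to remove edges $\{i,l\}$ and $\{j,k\}$ and add edges $\{i,k\}$ and $\{j,l\}$. Crucially, because the degree distribution $\D$ is fixed, the degrees $d_i, d_j, d_k, d_l$ are unchanged by the switch, so the only contributions to $M_2(\A') - M_2(\A)$ come from the four edges that are toggled; every other term in the sum is identical in $\A$ and $\A'$.

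First I would write
\begin{align*}
M_2(\A') - M_2(\A) &= (d_i d_k + d_j d_l) - (d_i d_l + d_j d_k),
\end{align*}
where the first parenthesis collects the two newly created edges and the second the two destroyed edges. Then I would factor: $d_i d_k + d_j d_l - d_i d_l - d_j d_k = d_i(d_k - d_l) - d_j(d_k - d_l) = (d_i - d_j)(d_k - d_l)$, which is exactly the claimed formula.

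The one subtlety I would want to address carefully is whether the four toggled entries genuinely correspond to four distinct edges contributing independently to the sum, given that the matrix is symmetric and we work with the symmetric switching matrix $\CC_{ijkl}$. Because the coordinates $(i,j,k,l)$ are required to be all distinct (as stipulated in the paper for the graph case, to avoid the diagonal), the four unordered pairs $\{i,k\}, \{j,l\}, \{i,l\}, \{j,k\}$ are genuinely four distinct edges of the underlying simple graph, so no edge is double-counted and the symmetric pair of checkerboards corresponds to a single rewiring of these four edges. I expect this bookkeeping — confirming the correspondence between the signed entries of $\CC_{ijkl}$ and the added/removed edges, and confirming no overlap among the four edges — to be the only real obstacle; once it is in place, the algebraic factorization is immediate and the proof concludes.
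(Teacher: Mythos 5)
Your proof is correct and matches the paper's argument: both identify the switch as removing edges $il$, $jk$ and adding $ik$, $jl$, then write $M_2(\A')-M_2(\A)=d_id_k+d_jd_l-d_id_l-d_jd_k$ and factor. Your extra check that the four pairs are distinct (since $i,j,k,l$ are all different) is a harmless elaboration of the same reasoning.
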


\begin{proof}
$\A'$ is obtained from $\A$ by adding the edges $ik$ and $jl$ and removing $il$ and $kj$. So $M_2(\A')= M_2(\A)+d_id_k+d_jd_l-d_id_l-d_jd_k$.
\end{proof}

\begin{prop}
$M_2$ and $Z_2$ are non-decreasing along arcs of $G(\D)$.
\end{prop}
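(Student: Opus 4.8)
The plan is to show that $M_2$ and $Z_2$ do not decrease across any arc of $G(\D)$. Since an arc corresponds to a positive switch, this reduces to a local computation already packaged in \Cref{M2 switch}: if $\A'$ is obtained from $\A$ by a positive switch of coordinates $(i,j,k,l)$, then
$$M_2(\A')-M_2(\A)=(d_i-d_j)(d_k-d_l).$$
So the first and essentially only step is to argue that this quantity is non-negative under our conventions.

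The key observation is that the vertices are sorted by non-increasing degree, so $\D$ is non-increasing. By the definition of the coordinates of a checkerboard, we have $i<j$ and $k<l$. Since $\D$ is non-increasing, $i<j$ gives $d_i\geq d_j$, hence $d_i-d_j\geq 0$, and likewise $k<l$ gives $d_k-d_l\geq 0$. Therefore the product $(d_i-d_j)(d_k-d_l)$ is a product of two non-negative numbers and is itself non-negative, so $M_2(\A')\geq M_2(\A)$. Because $M_2$ is non-decreasing along every arc, it is non-decreasing along every directed path.

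For $Z_2$, recall that $Z_2=\sqrt{M_2/m}$, where $m=|E|$ is fixed throughout $\MM(\D)$ (the number of edges is determined by $\D$, being $\frac12\sum_i d_i$, and a switch neither creates nor destroys edges). Thus $Z_2$ is a fixed increasing function of $M_2$, namely $t\mapsto\sqrt{t/m}$ with $m$ constant. Since $M_2$ is non-decreasing along arcs and the map $t\mapsto\sqrt{t/m}$ is monotonically increasing, $Z_2$ is non-decreasing along arcs as well.

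I do not anticipate a genuine obstacle here: the entire content is the sign of $(d_i-d_j)(d_k-d_l)$, which follows immediately from the non-increasing ordering of the degrees together with the coordinate convention $i<j$, $k<l$ built into the definition of a positive switch. The only point that deserves an explicit remark is that $m$ is invariant under switching, so that the monotonicity of $M_2$ transfers cleanly to $Z_2$; this is the reason the statement can group both quantities together.
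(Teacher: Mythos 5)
Your proof is correct and follows the same route as the paper: it applies \Cref{M2 switch} and the convention that $\D$ is sorted in non-increasing order (together with $i<j$, $k<l$) to get $(d_i-d_j)(d_k-d_l)\geq 0$, then passes to $Z_2$ via the invariance of $m$. You simply spell out the details that the paper's one-line proof leaves implicit (and you use the correct word ``non-increasing'' where the paper's proof has a slip saying ``non-decreasing'').
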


\begin{proof}
This follows from \Cref{M2 switch} and $\D$ being non-decreasing.
\end{proof}

Note that $M_2$ and $Z_2$ are constant only along arcs of $G(\D)$ which correspond to a checkerboard of coordinates $(i,j,k,l)$ where $d_i=d_j$ or $d_k=d_l$. 

\begin{corollary}
The sources and sinks of $G(\D)$ are, respectively, the local minima and maxima of $M_2$ and $Z_2$. 
\end{corollary}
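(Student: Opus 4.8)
The plan is to establish both directions of the claimed equivalence, relating the extremal property of $M_2$ (equivalently $Z_2$, since $Z_2 = \sqrt{M_2/m}$ is a strictly increasing function of $M_2$ for fixed $m$) to the source/sink structure of the directed acyclic graph $G(\D)$. First I would recall that a vertex of $G(\D)$ is a sink precisely when it has out-degree zero, meaning no positive switch can be applied, and a source precisely when it has in-degree zero, meaning no negative switch can be applied. By the preceding proposition, $M_2$ and $Z_2$ are non-decreasing along every arc; this monotonicity is the key structural fact driving both implications.

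For the forward direction, suppose $\A$ is a sink of $G(\D)$. Then no positive switch is possible, so every arc incident to $\A$ is incoming. Since $M_2$ is non-decreasing along arcs, $M_2(\A') \leq M_2(\A)$ for every in-neighbour $\A'$, and because there are no out-neighbours, $\A$ is a local maximum of $M_2$ with respect to the underlying interchange graph (whose adjacencies are single switches). The same argument with reversed inequalities handles sources as local minima, using that every incident arc is outgoing. I would carry out the source case by symmetry, noting that complementation or simply the reversal of the monotonicity inequality along outgoing arcs gives $M_2(\A) \leq M_2(\A')$ for all neighbours $\A'$.

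For the converse, I would argue the contrapositive. If $\A$ is not a sink, it has at least one outgoing arc to some $\A'$, obtained by a positive switch of coordinates $(i,j,k,l)$. The subtle point here is that $M_2(\A') - M_2(\A) = (d_i - d_j)(d_k - d_l)$ by \Cref{M2 switch}, and since $\D$ is sorted non-increasingly with $i<j$ and $k<l$, this difference is non-negative but can be zero when $d_i = d_j$ or $d_k = d_l$. Thus to conclude $\A$ is not a local maximum, I must either exhibit an out-neighbour with strictly larger $M_2$, or else recognise that a local maximum in the weak sense must be interpreted as one where no neighbour has strictly greater value. I expect this to be the main obstacle: the remark immediately preceding the corollary flags that $M_2$ can be constant along certain arcs, so the cleanest reading is that sinks are exactly the weak local maxima (no neighbour with strictly greater $M_2$), and sources the weak local minima. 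Under that reading the converse is immediate, since a non-sink has an outgoing arc along which $M_2$ does not decrease and the vertex is still a sink-candidate only if equality holds — so I would state the corollary as characterising sinks and sources as the vertices at which $M_2$ attains a local maximum and minimum respectively in the non-strict sense, and verify that the acyclicity from \Cref{acyclic} guarantees global directed monotonicity so that no genuine ambiguity arises.

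Finally, I would tie the $Z_2$ statement to the $M_2$ statement by observing that within $\MM(\D)$ the number of edges $m$ is fixed, so $Z_2$ and $M_2$ have identical local extrema, completing the proof.
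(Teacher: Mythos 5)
The paper gives no proof of this corollary at all: it is stated as an immediate consequence of the preceding proposition that $M_2$ and $Z_2$ are non-decreasing along arcs. Your forward direction (a sink has only incoming arcs, along each of which $M_2$ does not decrease, hence no neighbour in the underlying interchange graph has strictly larger $M_2$; dually for sources) is precisely that intended argument, and the reduction from $Z_2$ to $M_2$ via the fixed edge count $m$ is fine.

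The gap is in your treatment of the converse, and you half-see it yourself. You correctly flag that an outgoing arc can have $M_2(\A')-M_2(\A)=(d_i-d_j)(d_k-d_l)=0$ when $d_i=d_j$ or $d_k=d_l$, but your proposed fix --- reading ``local maximum'' in the weak sense and declaring the converse ``immediate'' --- does not work. A non-sink whose every outgoing arc is of this degenerate type is still a weak local maximum without being a sink, so the equivalence genuinely fails. The cleanest counterexample is a regular degree sequence, e.g.\ $\D=(2,\dots,2)$ on six vertices: $M_2=4m$ is constant on all of $\MM(\D)$, so every matrix is simultaneously a weak local maximum and minimum, yet $G(\D)$ contains arcs (the hexagon and the pair of triangles are connected by switches) and hence has vertices that are neither sources nor sinks. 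Acyclicity of $G(\D)$ does not rescue this; it only prevents directed cycles, not level sets of $M_2$. The honest statement, and the only one the paper's proposition supports, is the one-directional implication: every sink is a (weak) local maximum of $M_2$ and $Z_2$, and every source is a (weak) local minimum. You should either prove only that implication, or add the hypothesis that no two adjacent degrees coincide (so that $M_2$ is strictly increasing along every arc), under which your contrapositive argument for the converse does go through.
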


The above results give a clear picture of the variations of $M_2$ and $Z_2$ along $G(\D)$. Unfortunately, the variations of $\ll$ are not quite so neatly organised. The following lemma gives a lower bound to the effect of a single positive switch on the spectral radius:

\begin{lemma}\label{lambda1 switch}
Let $\A,\A'\in\MM(\D)$ such that $\A'$ can be obtained from $\A$ by a sequence of positive switches of coordinates $(i_p,j_p,k_p,l_p)$. Let $X$ denote the normalised principal eigenvector of $\A$. We have $$\ll(\A')-\ll(\A)\geq 2\sum_p (x_{i_p}-x_{j_p})(x_{k_p}-x_{l_p}).$$
\end{lemma}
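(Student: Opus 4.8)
The plan is to use the Rayleigh quotient characterization of the spectral radius. Since $\A$ and $\A'$ are symmetric, for any unit vector $Y$ we have $\ll(\A') \geq Y^t \A' Y$, so the natural strategy is to plug in $X$, the normalised principal eigenvector of $\A$, as a \emph{test vector} for $\A'$. This gives $\ll(\A') \geq X^t \A' X$, while $\ll(\A) = X^t \A X$ exactly, since $X$ is the eigenvector of $\A$ for $\ll(\A)$. Subtracting, the bound reduces to controlling $X^t(\A'-\A)X = X^t \M X$, where $\M = \A'-\A$.

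The key computation is then to evaluate $X^t \M X$. By hypothesis $\A'$ is obtained from $\A$ by a sequence of positive switches, so by the symmetric analogue of \Cref{NC} the difference decomposes as $\M = \A'-\A = \sum_p \CC_{i_p,j_p,k_p,l_p}$, where $\CC_{ijkl} = \C_{ijkl} + {}^t\C_{ijkl}$ is the symmetric switching matrix. Because the quadratic form is additive over this sum, I would compute $X^t \CC_{ijkl} X$ for a single symmetric switching matrix. Recalling that $\C_{ijkl}$ has entries $c_{ik}=c_{jl}=1$ and $c_{il}=c_{kj}=-1$, symmetrising and reading off the contribution to the bilinear form gives $X^t \CC_{ijkl} X = 2(x_i x_k + x_j x_l - x_i x_l - x_j x_k) = 2(x_i-x_j)(x_k-x_l)$. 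Summing over $p$ yields $X^t \M X = 2\sum_p (x_{i_p}-x_{j_p})(x_{k_p}-x_{l_p})$, which is exactly the claimed lower bound.

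Putting the pieces together, $\ll(\A') - \ll(\A) \geq X^t \A' X - X^t \A X = X^t \M X = 2\sum_p (x_{i_p}-x_{j_p})(x_{k_p}-x_{l_p})$. The only subtlety to flag is that the variational inequality $\ll(\A') \geq X^t \A' X$ relies on $X$ being a unit vector (which it is, as the \emph{normalised} eigenvector) and on the Rayleigh–Ritz theorem for symmetric matrices; the equality $\ll(\A) = X^t \A X$ uses that $X$ is genuinely an eigenvector for the top eigenvalue, not merely any vector. I would also note that $X$ being the principal eigenvector of $\A$ (not $\A'$) is what makes the inequality one-directional: we get a clean equality on the $\A$ side and only an inequality on the $\A'$ side.

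The main obstacle is essentially bookkeeping rather than a deep difficulty: one must correctly track the factor of $2$ coming from symmetrisation and verify the sign pattern of the cross terms so that the product factors as $(x_i-x_j)(x_k-x_l)$ rather than its negation. Since positive switches add $\CC_{ijkl}$ with the fixed sign convention established earlier, the signs line up so that the bound is a genuine lower bound; there is no need to assume anything about the sign of the right-hand side, which may well be negative, reflecting the fact that $\ll$ need not increase along every individual arc even though $M_2$ does.
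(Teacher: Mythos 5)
Your proposal is correct and follows essentially the same route as the paper: use $X$ as a test vector in the Rayleigh quotient for $\A'$, note $\ll(\A')\geq{}^tX\A'X$ while $\ll(\A)={}^tX\A X$, and evaluate ${}^tX\bigl(\sum_p\CC_{i_p,j_p,k_p,l_p}\bigr)X=2\sum_p(x_{i_p}-x_{j_p})(x_{k_p}-x_{l_p})$. Your added remarks on the factor of $2$ and on the right-hand side possibly being negative are accurate but not needed beyond what the paper's proof already contains.
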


\begin{proof}
Let $X'$ denote the normalised principal eigenvector of $\A'$. Recall that $$\ll(\A)=~^tX\A X=\max_{x\in \R^n,||x||=1}~^tx\A x,$$
where $||~||$ is the $L^2$-norm. We have:
$$\ll(\A')-\ll(\A)=~^tX'\A'X'-~^tX\A X \geq  ~^tX\A'X-~^tX\A X$$
$$=~^tX\sum_p\CC_{i_p,j_p,k_p,l_p}X=2\sum_p (x_{i_p}-x_{j_p})(x_{k_p}-x_{l_p}).$$
\end{proof}

Unfortunately, the coefficients of the principal eigenvector are not always in the same order as the degrees. They are, however, strongly correlated \cite{li2015correlation}; so $\ll$ increases along most arcs of $G(\D)$. In fact, they are perfectly correlated in the case where $\ll$ is maximum:

\begin{prop}\label{eigenvector order}
If $\A$ realises the maximum of $\ll$ over $\MM(\D)$, then the coefficients of the principal eigenvector $X$ of $\A$ are in the same order as the degrees; i.e. $d_i> d_j\imp x_i \geq x_j$.
\end{prop}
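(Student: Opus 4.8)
The plan is to argue by contradiction: assuming some pair of vertices violates the claimed ordering, I will exhibit a single degree-preserving rewiring $\A'\in\MM(\D)$ together with a unit vector $Y$ whose Rayleigh quotient $^tY\A'Y$ strictly exceeds $\ll(\A)$, contradicting that $\A$ maximises $\ll$. The tools are the variational characterisation $\ll(\A')=\max_{\|y\|=1}{}^ty\A'y$ already used in the proof of \Cref{lambda1 switch}, and Perron--Frobenius, which lets me take $X\ge 0$; I will assume $\A$ is connected so that $X>0$, and comment on the general case at the end.

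So suppose $d_i>d_j$ but $x_i<x_j$. Write $N(v)$ for the neighbourhood of $v$ and set $A=N(i)\setminus N(j)$ and $B=N(j)\setminus N(i)$. Since the common neighbours and the possible mutual edge $\{i,j\}$ contribute equally to $d_i$ and $d_j$, one has $|A|-|B|=d_i-d_j>0$; in particular $|A|>|B|\ge 0$. Choose any subset $A_1\subseteq A$ with $|A_1|=|B|$ and build $\A'$ from $\A$ by deleting the edges $\{i,a\}$ ($a\in A_1$) and $\{j,b\}$ ($b\in B$) and inserting the edges $\{j,a\}$ ($a\in A_1$) and $\{i,b\}$ ($b\in B$). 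Every vertex keeps its degree (vertices $i$ and $j$ each trade $|A_1|=|B|$ edges, and each relocated endpoint merely changes partner), the result is a simple graph, and the mutual edge $\{i,j\}$ is untouched; hence $\A'\in\MM(\D)$.

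For the test vector take $Y$ equal to $X$ with the coordinates $i$ and $j$ transposed, so $y_i=x_j$, $y_j=x_i$ and $y_v=x_v$ otherwise; then $\|Y\|=\|X\|=1$. Writing both Rayleigh quotients as sums over edges and cancelling every contribution not incident to $i$ or $j$ (the edge $\{i,j\}$ cancels too, since $y_iy_j=x_ix_j$), a short computation collapses to
$${}^tY\A'Y-{}^tX\A X=2(x_j-x_i)\sum_{v\in A\setminus A_1}x_v.$$
Because $|A\setminus A_1|=|A|-|B|\ge 1$ and $X>0$, the right-hand side is strictly positive. Therefore $\ll(\A')\ge {}^tY\A'Y>{}^tX\A X=\ll(\A)$, contradicting the maximality of $\A$, and the proposition follows.

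The decisive point, and the step I expect to be the real obstacle to discover, is the choice of test vector: probing $\A'$ with $X$ itself only yields $^tX\A'X-{}^tX\A X=2(x_i-x_j)\big(\sum_{v\in B}x_v-\sum_{v\in A_1}x_v\big)\le 0$, reflecting that $\A$ is already locally optimal for the ``assortative'' quadratic form; it is precisely the transposition of the two offending coordinates that converts the surplus $|A|-|B|>0$ of private neighbours of $i$ into a genuine gain. A secondary technical point is the positivity of $X$: the computation in fact delivers a strict increase whenever $x_i>0$ (then every neighbour of $i$, hence all of $A$, lies in the principal component and carries positive weight), so the only residual case is $x_i=0<x_j$, which forces $i$ into a non-principal component and is handled by first reducing to a connected maximiser.
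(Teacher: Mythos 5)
Your main construction is, after unwinding, the same as the paper's: the paper moves $d_i-d_j$ edges from $i$ to $j$ (temporarily leaving $\MM(\D)$) and then swaps rows and columns $i$ and $j$ to return to $\MM(\D)$; composing those two steps yields exactly your matrix $\A'$, and testing the paper's intermediate matrix with $X$ is the same computation as testing your $\A'$ with the transposed vector $Y$. Indeed your gain $2(x_j-x_i)\sum_{v\in A\setminus A_1}x_v$ is literally the paper's $2(x_j-x_i)\sum_{k\in E}x_k$ with $E=A\setminus A_1$. So the ``decisive point'' you flag is the paper's closing permutation step in different clothing, and in the connected case your argument is complete and correct. (One cosmetic point: with $A=N(i)\setminus N(j)$ and $B=N(j)\setminus N(i)$ you should explicitly remove $j$ from $A$ and $i$ from $B$ when $ij$ is an edge, or the rewiring could attempt to create a loop; your remark that the mutual edge is untouched shows you intend this.)

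The genuine gap is the degenerate case you relegate to the last sentence. If $x_i=0<x_j$, then $i$ lies in a non-principal component, every vertex of $A$ carries weight $0$, and your Rayleigh-quotient gain vanishes, giving only $\ll(\A')\ge\ll(\A)$. ``First reducing to a connected maximiser'' is not an available move: the proposition quantifies over every maximiser of $\ll$ on $\MM(\D)$, the maximiser need not be connected, and connectivity cannot be assumed without loss of generality (some degree sequences force disconnection). The paper closes exactly this case with a separate argument: it shows via Perron--Frobenius and the eigenvalue equation that the whole component of $i$ has zero weight, and that after the rewiring the principal component of $\A$ (the one containing $j$) becomes a proper subgraph of a connected component of the new matrix, so that $\ll$ strictly increases by strict monotonicity of the spectral radius of connected graphs under edge addition. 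You would need to supply an argument of this kind (it does go through for your $\A'$ as well, since $i$ inherits all of $j$'s neighbours plus at least one vertex of $A\setminus A_1$) rather than appeal to a reduction that does not exist.
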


\begin{proof}
Assume we have $d_i> d_j$ and $x_i<x_j$. Let $E$ be a set of $d_i-d_j$ vertices adjacent to $i$ and not $j$, with $j\notin E$. Let $\A'$ be obtained from $\A$ by replacing the edges between $i$ and $E$ with edges between $j$ and $E$. Note that all the non-zero coefficients of $\A'-\A$ are in rows and columns $i$ and $j$. Let $X'$ be the normalised principal eigenvector of $\A'$. We have:
$$\ll(\A')-\ll(\A)\geq  ~^tX\A'X-~^tX\A X=2(x_j-x_i)\sum_{k\in E}x_k\geq 0. $$

Consider the case where $2(x_j-x_i)\sum_{k\in E}x_k= 0$. According to the Perron-Froebenius theorem for non-negative matrices, $X$ has non-negative coefficients. Thus, since $x_j>x_i$, $\forall k\in E, x_k=0$. Let $k\in E$, then $\sum_l a_{kl}x_l=\ll(\A)x_k=0$. Since all terms in $\sum_l a_{kl}x_l$ are non-negative and $k$ is adjacent to $i$, we deduce that $x_i=0$. Similarly, any vertex $l$ in the connected component of $i$ has $x_l=0$. Since $x_j>0$, $\ll(\A)$ is the spectral radius of the connected component of $j$, to which $i$ does not belong. Thus, the connected component of $j$ in $\A$ is a proper sub-graph of that in $\A'$. So $\ll(\A')>\ll(\A)$ in this case and in all cases. 

Let $\A''$ be obtained from $\A'$ by switching the $i$-th and $j$-th rows and the $i$-th and $j$-th columns. $\A''$ has the same row and column sums as $\A$ and we have $\ll(\A'')=\ll(\A')>\ll(\A)$. Thus, $\A$ does not maximise $\ll$.
\end{proof}

From this, we deduce the following:

\begin{theorem}\label{max radius}
The maximum of $\ll$ is reached at a sink of $G(\D)$.
\end{theorem}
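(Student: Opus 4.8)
The plan is to feed a $\ll$-maximizer into \Cref{lambda1 switch} using the ordering information supplied by \Cref{eigenvector order}. Let $\A\in\MM(\D)$ realize $\max_{\MM(\D)}\ll$, let $X$ be its normalized principal eigenvector, and suppose for the moment that $X$ is non-increasing along the vertex order, i.e. $i<j\imp x_i\ge x_j$ (I justify this normalization below). I claim that for such an $\A$ every positive switch keeps $\ll$ at its maximum. Indeed, a positive switch has coordinates $(i,j,k,l)$ with $i<j$ and $k<l$, so $x_i\ge x_j$ and $x_k\ge x_l$, and \Cref{lambda1 switch} gives $\ll(\A')-\ll(\A)\ge 2(x_i-x_j)(x_k-x_l)\ge 0$. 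Maximality of $\ll(\A)$ forces equality, so the switched matrix $\A'$ is again a maximizer.

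With this in hand the theorem reduces to the acyclic structure of $G(\D)$. As in \Cref{acyclic}, each positive switch of coordinates $(i,j,k,l)$ strictly increases the potential $I(\A)=\sum_{i,j}ij\,a_{ij}$ by $2(j-i)(l-k)>0$. Starting from the maximizer $\A$, I would apply positive switches one at a time: whenever the current matrix still has a negative checkerboard it is not a sink, so a positive switch is available, and by the previous paragraph it lands on another maximizer while strictly raising $I$. Since $\MM(\D)$ is finite, $I$ cannot grow indefinitely, so the process halts, and it can only halt at a matrix with no negative checkerboard, i.e. at a sink of $G(\D)$. This terminal sink is a maximizer, which is exactly the assertion of the theorem.

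The main obstacle is that \Cref{eigenvector order} only guarantees $x_i\ge x_j$ under a \emph{strict} degree inequality $d_i>d_j$, whereas the first paragraph also used it for switches between vertices of equal degree. I would close this gap, before iterating, by relabeling within each block of equal degree so that $X$ becomes non-increasing with respect to the fixed vertex order; such a relabeling only permutes rows and columns of equal degree, hence maps $\MM(\D)$ to itself and leaves $\ll$ unchanged, and the relabeled matrix is a maximizer satisfying $i<j\imp x_i\ge x_j$ outright (across blocks by \Cref{eigenvector order}, within a block by the sorting). The delicate point, and the step I expect to require the most care, is that a switch changes $X$ and may destroy this sortedness, so one must re-sort between steps; because re-sorting can alter $I$, the termination bookkeeping is better run on $M_2$, which is invariant under relabeling of equal-degree vertices, is non-decreasing along every arc (\Cref{M2 switch} with $\D$ non-increasing), and strictly increases on every switch with $d_i>d_j$ and $d_k>d_l$. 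This reduces the remaining analysis to the finitely many degenerate (equal-degree) switches, on which $M_2$ is constant and where the $I$-argument can be re-run on the sorted representative within a single equal-degree block.
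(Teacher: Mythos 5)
Your first paragraph is exactly the paper's opening move: sort the principal eigenvector of a maximizer within equal-degree blocks (legitimate, since such a relabeling preserves $\MM(\D)$ and $\ll$), invoke \Cref{eigenvector order} across blocks, and feed the resulting monotone $X$ into \Cref{lambda1 switch}. But from there you take a step-by-step route that opens a genuine gap. After one switch the new matrix $\A'$ is a maximizer whose eigenvector need not be sorted, so to repeat the argument you re-sort; re-sorting is a row/column permutation that is \emph{not} an arc of $G(\D)$ and does not even preserve the sink property or the sign of checkerboards, so the object you are building is no longer a directed path and neither $I$ nor $M_2$ alone certifies termination. Your proposed repair is not carried out: a degenerate switch (one with $(d_i-d_j)(d_k-d_l)=0$) need not have all four vertices in a single equal-degree block, the interleaved relabelings can move $I$ in either direction, and nothing in the sketch excludes the switch-then-resort process revisiting a matrix. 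As written, the existence of a reachable sink that is a maximizer is not established.

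The fix is already in the statement of \Cref{lambda1 switch}: it bounds $\ll(\A')-\ll(\A)$ for an arbitrary \emph{sequence} of positive switches using only the eigenvector $X$ of the \emph{initial} matrix, because the proof compares $^tX'\A'X'$ with $^tX\A'X$ and then telescopes $\A'-\A=\sum_p\CC_{i_p,j_p,k_p,l_p}$ against the single fixed vector $X$. So once $X$ is sorted for the maximizer $\A$, every term $2(x_{i_p}-x_{j_p})(x_{k_p}-x_{l_p})$ is nonnegative for every directed path out of $\A$, hence every matrix reachable from $\A$ is a maximizer; since $G(\D)$ is a finite directed acyclic graph, some sink is reachable from $\A$, and that sink maximizes $\ll$. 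No re-sorting, no second potential function, and no case analysis of degenerate switches is needed. Your single-switch version of the lemma is what forces the extra machinery; apply it to whole paths and your argument collapses to the paper's.
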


\begin{proof}
Let $\A$ maximise $\ll$ over $\MM(\D)$. For each set of same-degree vertices, we can reorder the rows and columns of $\A$ so that within these sets, the coefficients of the principal eigenvector are non-increasing. This operation does not affect $\ll$. It follows from \Cref{eigenvector order} that all the coefficients of $X$ are now non-increasing. Let $\A'$ be obtained from $\A$ by a sequence of positive switches of coordinates $(i_p,j_p,k_p,l_p)$. It follows from \Cref{lambda1 switch} that $$\ll(\A')-\ll(\A)\geq 2\sum_p(x_{i_p}-x_{j_p})(x_{k_p}-x_{l_p})\geq 0.$$ Since $\ll(\A)$ is maximum, any matrix $\A'\in\MM(\D)$ such that $\A\rar\A'$ also maximises $\ll$, including the sinks that can be reached from $\A$.
\end{proof}

\subsection{Switching Algorithm Simulations}

While \Cref{max radius} tells us that at least one sink of $G(\D)$ realises the global maximum of $\ll$, it gives no indications for identifying which sink to aim for when $G(\D)$ contains several, as is usually the case. In this section, we showcase how applying successive random switches to Erd\H os-Rényi and Small-World graphs, after ordering its vertices by degree, leads to an increase in $\ll$, which is significant when the edge density is low. 

\begin{figure}[H]
    \centering
    \includegraphics[width=3.9cm]{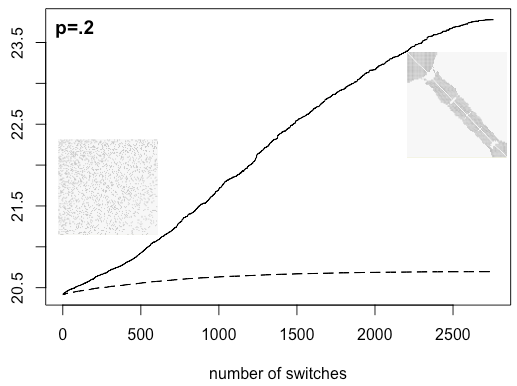}
    \includegraphics[width=3.8cm]{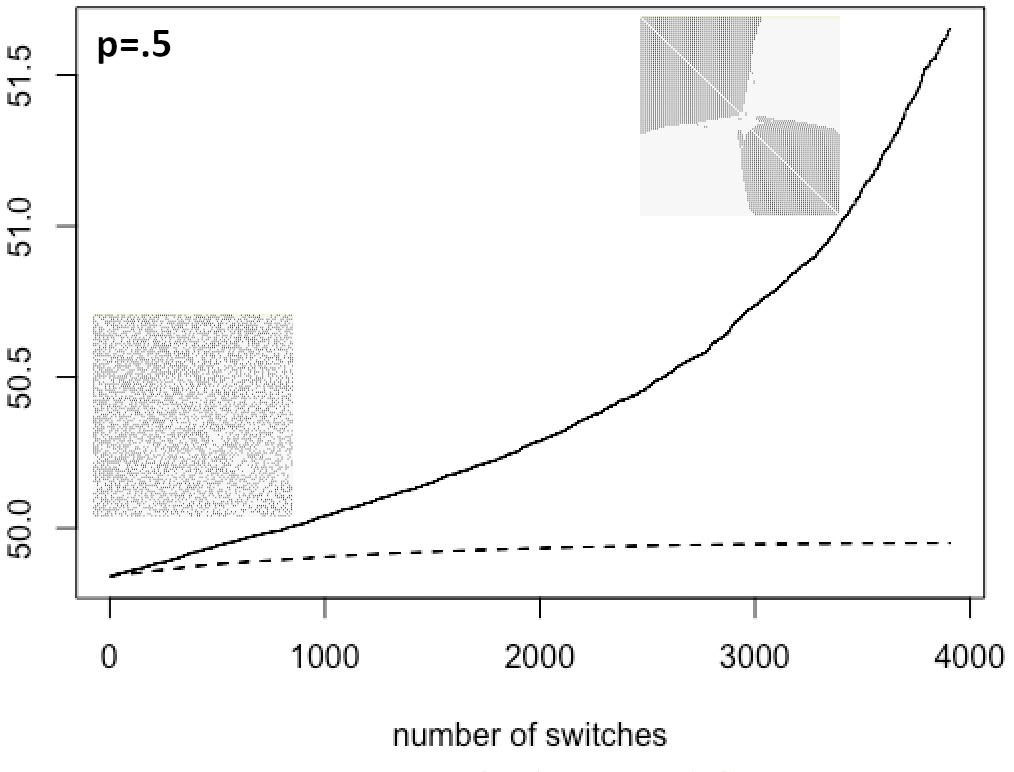}
    \includegraphics[width=4cm]{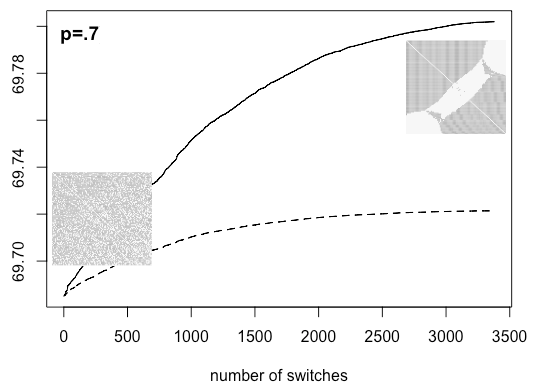}
    \caption{Spectral radius (plain) and $Z_2$ (dotted) as functions of the number of positive switches, starting from Erd\H os-R\'enyi graphs with $N=100$ vertices. From left to right, the edge density $p$ ranges from $0.2$ to $0.7$. Both the initial (Erd\H os-R\'enyi with vertices ordered by degree) and the final matrices are shown in each case. }
    \label{fig:ER_graphs}
\end{figure}

Instead of using a Monte-Carlo method as in the Xulvi-Brunet Sokolov algorithm \cite{xulvi2004reshuffling}, we only apply positive switches selected randomly in the adjacency matrix. Simulations that begin with an Erd\H os-R\'enyi graph $E(N,p)$ with vertices ordered by degree are shown in \Cref{fig:ER_graphs} and Small World graphs appear in \Cref{fig:SW_graphs}. In both cases, the end result of the switching process is a zebra with some noise when $p>0.5$ and an anti-zebra with some noise when $p<0.5$. When $p=0.5$, the end point is simultaneously almost a zebra and almost an anti-zebra. In the Erd\H os-R\'enyi case with $p=0.2$, the spectral radius increases by more than $15\%$, which is very impressive for a fixed degree distribution. Conversely, when $p=0.7$, the spectral radius increases only by a few decimal points.

\begin{figure}[H]
    \centering
    \includegraphics[height=4cm]{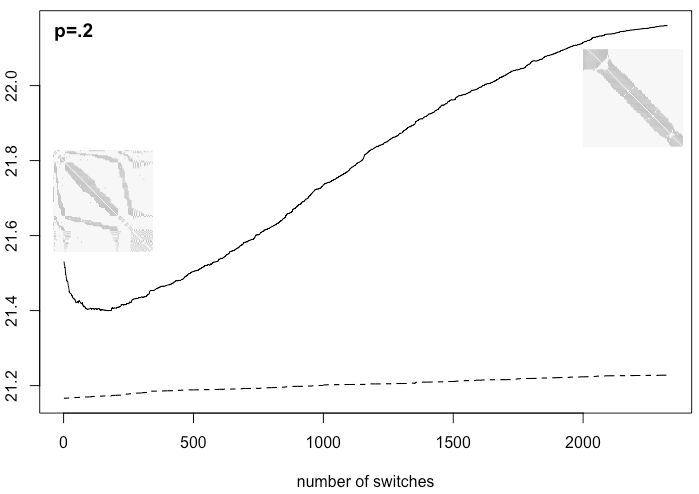}
    \includegraphics[height=4cm]{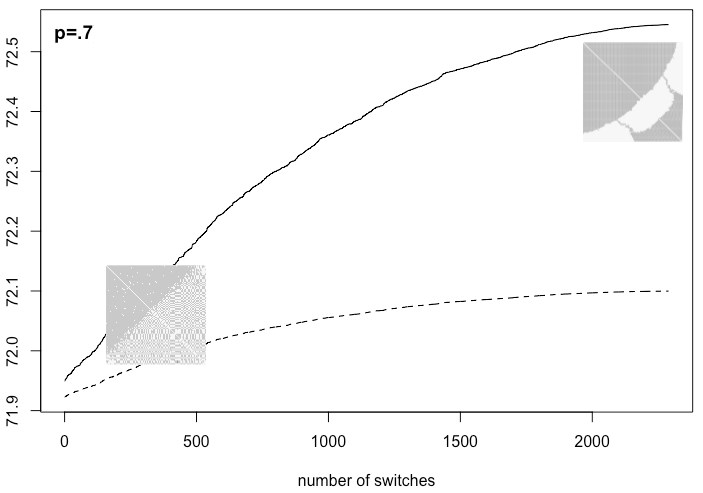}
    \caption{Same as \Cref{fig:ER_graphs} with Small-World graphs obtained from a $10 \times 10$ regular grid after a random rewiring of 10\% of the edges. }
    \label{fig:SW_graphs}
\end{figure}

\section{Conclusion}

Our analysis of the oriented graph of matrices has given us the tools needed to study how the topology of graphs impacts their spectral radius $\ll$. We have shown that the global maximum of $\ll$ is reached for a matrix without negative checkerboards. Our simulations show that successively switching negative checkerboards to positive can yield a high increase in $\ll$, especially for sparse graphs.

\section*{Acknowledgements} We would like to thank Cristophe Crespelle for a fruitful research session in Bergen, where \Cref{ex2} was discovered and \Cref{SC} was intuited, and the LabEx SMS ANR-11-LABEX-0066 for providing mobility funding.

\section*{Appendix: Proofs of Theorems 2 and 3}\label{sec:proofs}

\subsection{Proof of \Cref{th:zebra}}

\begin{theorem*}
If $\MM(R,C)$ contains a split zebra or a split anti-zebra, then that split zebra or anti-zebra is the only element of $\MM(R,C)$ without negative checkerboards; i.e it is the unique sink in $G(R,C)$.
\end{theorem*}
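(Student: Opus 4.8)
The plan is to prove the equivalent statement that every matrix in $\MM(R,C)$ other than the given split zebra (or anti-zebra) $\mathbf B$ contains a negative checkerboard. Since $\mathbf B$ itself has none — it is a zebra, so it avoids the sub-matrices of \Cref{geometry}, and those include negative checkerboards — this is exactly the assertion that $\mathbf B$ is the unique sink. Before anything else I would cut down the case analysis by symmetry: transposition fixes negative checkerboards but exchanges rows and columns, turning a vertically split zebra into a horizontally split one, while complementation sends a zebra to an anti-zebra and reverses every arc (swapping sinks and sources). Composing these operations lets me assume throughout that $\mathbf B$ is a \emph{horizontally} split zebra, with a top block of nested rows and a bottom block of anti-nested rows.

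Next I would record the two facts I intend to exploit. First, the split structure makes every row of $\mathbf B$ monotone: a top row $i$ reads $1^{R_i}0^{q-R_i}$ and a bottom row $i$ reads $0^{q-R_i}1^{R_i}$; moreover, by \Cref{no sub}, $\mathbf B$ contains neither $(1,0,1)$ nor $(0,1,0)$ along a row, and no $(0,1,0)$ down a column. Second, being a sink translates into a clean ordering condition: $\A$ has no negative checkerboard if and only if, for every pair of columns $k<l$, all rows with $(a_{ik},a_{il})=(1,0)$ lie above all rows with $(a_{ik},a_{il})=(0,1)$ (and dually for pairs of rows). Both $\mathbf B$ and any hypothetical second sink $\A$ must satisfy this condition.

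Now suppose, for contradiction, that $\A\neq\mathbf B$ is a second sink and set $\M=\A-\mathbf B$. Then $\M$ is nonzero, has zero row and column sums, and entries in $\{-1,0,1\}$. Using the explicit description of $\mathbf B$'s rows, a $+1$ of $\M$ in a top row sits strictly to the right of that row's block of $1$s in $\mathbf B$, while a $-1$ sits inside it (with the mirror statement for bottom rows); so $\A$ differs from $\mathbf B$ by pushing top-row $1$s rightward and bottom-row $1$s leftward. I would then pick an extremal discrepancy — for instance a $+1$ of $\M$ in the highest possible row and, within it, in an extreme column — and trace the $-1$ forced to compensate it in its column together with the $-1$ forced to compensate it in its row. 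Feeding the positions of these entries into the ordering condition for $\A$ and for $\mathbf B$ simultaneously, and using that $\mathbf B$ avoids the three forbidden $3$-patterns, should force two rows and two columns on which $\A$ reads $\begin{pmatrix}0&1\\1&0\end{pmatrix}$, a negative checkerboard in $\A$, contradicting that $\A$ is a sink.

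The main obstacle is precisely this last step. The ordering condition only controls pairs of columns, so I expect the argument to branch according to whether each compensating $-1$ lies above or below, and to the left or right of, the chosen $+1$; the one pattern that a horizontally split zebra is allowed to contain, namely $(1,0,1)$ down a column, must be handled with particular care, since it is exactly the configuration that survives in the non-split case and there destroys uniqueness. The role of the split hypothesis is to eliminate every remaining branch, so the delicate point is to verify that an extremal choice of discrepancy always lands in a situation where the split property (via \Cref{no sub}) forbids the compensating entries from arranging themselves so as to dodge a negative checkerboard. Once that is established the contradiction is immediate, and the vertically split and anti-zebra variants follow for free from the symmetry reduction.
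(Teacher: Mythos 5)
Your setup is sound: the symmetry reduction to a horizontally split zebra is exactly right, the description of where the $+1$s and $-1$s of $\M$ must sit relative to the blocks of $\mathbf B$ is correct, and invoking \Cref{geometry} and \Cref{no sub} as the source of the contradiction is the right instinct. But the proof has a genuine gap at precisely the point you flag: choosing one extremal $+1$ of $\M$ together with its row-compensating $-1$ and its column-compensating $-1$ gives you only three of the four entries of a would-be negative checkerboard in $\A$, and nothing forces the fourth corner to be a $1$. Concretely, if the $+1$ sits at $(i,l)$ with row-partner $-1$ at $(i,k)$ and column-partner $-1$ at $(j,l)$, you get $a_{ik}=0$, $a_{il}=1$, $a_{jl}=0$, but $a_{jk}$ may well be $0$, in which case no checkerboard appears on rows $\{i,j\}$ and columns $\{k,l\}$ and you must continue chasing the $+1$ that compensates the $-1$ at $(j,l)$ in its row, and so on. No extremal choice of the starting $+1$ short-circuits this: the discrepancies chain together into an alternating cycle that can be arbitrarily long and can self-intersect, and extracting a $2\times2$ negative checkerboard from it requires a global argument, not a local one.

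That global argument is what the paper's proof supplies and what your proposal is missing. The paper pairs each $1$ of $\M$ with a $-1$ in its row and its column to form an alternating cycle, uses \Cref{geometry} to show the cycle must turn right at every $-1$, uses \Cref{no sub} to show that whenever a left turn at a $1$ is followed by a right turn the cycle can either be shortened or already exhibits a negative checkerboard, and then runs a second shortening procedure on self-intersections of the resulting all-right-turn cycle. Each shortening step strictly reduces the cycle, so the process terminates at a four-term cycle, which is a negative checkerboard in $\A$. If you want to complete your proof you will essentially have to rebuild this machinery (or an equivalent induction on the total number of nonzero entries of $\M$); the case split you anticipate over the relative positions of the compensating entries does not close after one round, because the "dodge" you worry about really does occur and must be handled by iteration rather than by a cleverer initial choice.
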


\begin{proof}[Proof of theorem 2]
Let $\A\in\MM(R,C)$ be a zebra with a horizontal split (i.e. the top half is nested and the bottom anti-nested) and let $\A'\in\MM(R,C)$, $\A'\neq \A$. We will show that $\A'$ has a negative checkerboard. Let $\M=\A'-\A$. Since $\A$ and $\A'$ have the same row and column sums, $\M$ has as many 1s and -1s in each row and column. We can thus choose in each row and column of $\M$ a matching associating each 1 to a -1. Let us now choose a -1 in $\M$ as a starting point for the sequence defined by the following rules: a -1 is followed by its paired 1 in the same row; a 1 is followed by its paired -1 in the same column. Since the matrix $\M$ is finite, the sequence must form a cycle. A 1-11 sub-sequence in $\M$ corresponds to a 010 sequence in $\A$, with the first 0 in the same column as the 1 and the second 0 in the same row. Thus, it follows from  \Cref{geometry} that at a -1, the cycle must form a right turn. Also, after turning right (resp. left) at a 1 in $\M$ (0 in $\A$), the following -1 is located in the same column on the other side (resp. the same side) of the split in $\A$.

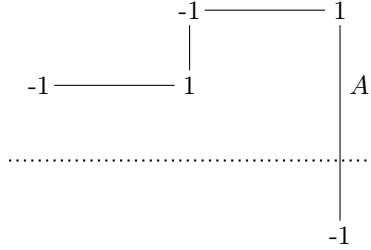
\begin{figure}[ht]
    \begin{center}
    \begin{tikzpicture}[scale=1]

        \node at (0,1) {-1};
        \node at (2,2) {-1};
        \node at (4,-1) {-1};
        \node at (4,2) {1};
        \node at (2,1) {1};

        \draw (0.2,1)--(1.8,1);
        \draw (2.2,2)--(3.8,2);
        \draw (2,1.2)--(2,1.8);
        \draw (4,-0.8)--(4,1.8);

        \draw[dotted, thick] (-0.4,0)--(4.4,0);
        
        \node [right] at (4,1) {$A$};

    \end{tikzpicture}
    \end{center}
    \caption{Section of a cycle with a left turn. The dotted line indicates the horizontal split of the zebra.}
    \label{pan}
\end{figure}

If the cycle turns left at every 1 (and right at every -1), it forms an infinite staircase pattern, which is impossible. Assume that there is at least one left turn at a 1. There must be a left turn at a 1 followed by a right turn at the next 1 somewhere in the cycle. This yields a -11-11-1 sequence with a left turn at the first 1 and a right turn at the second, as shown in \Cref{pan}. Note that the first four digits are located on the same side of the split while the final -1 is on the other side. We now consider the coefficient located at the intersection point $A$ of the row containing the first two digits and the column with the last two. Note that $A$ must be positioned between those last two digits due to the final -1 being on the other side of the split. If that coefficient is a 0 in $\A'$, then it forms a negative checkerboard in $\A'$ together with the middle three coefficients of our sequence; and thus $\A'$ is not a sink. Assume now that it is a 1 in $\A'$. The first two terms of our sequence are respectively 1 and 0 in $\A$. According to \Cref{no sub}, there is no $\begin{pmatrix}
1 & 0 & 1
\end{pmatrix}$ sub-matrix in $\A$. So the coefficient at $A$ is 0 in $\A$ and 1 in $\M$. Thus, we can shorten our cycle by replacing the middle three coefficients of our sequence with this 1. The resulting cycle has one fewer left turns. By repeating this operation, we obtain a cycle with no left turn.

Let us now consider the corresponding cycle in $\A'$. It satisfies the following properties:
\begin{itemize}
    \item It alternates between 0s and 1s, going from 0 to 1 horizontally and 1 to 0 vertically.
    \item It always turns to the right.
    \item All vertical segments cross the split.
\end{itemize}

If this cycle self-intersects, it forms one of the patterns shown in \Cref{spiralsbis} (possibly rotated by $180^{\circ}$). In case a) (resp. b)), if the coefficient located at intersection point $A$ is a $0$ (resp $1$), it forms a negative checkerboard with the second, third and fourth (resp. fourth, fifth and sixth) terms of the sequence. Else, if the coefficient located at $B$ is a $0$ (resp. $1$), $A$, $B$ and the fifth and sixth terms (resp. second and third terms) form a negative checkerboard. Else, if $B$ is a $1$ (resp. $0$), the sequence can be shortened by replacing the five middle terms with $B$. This removes the intersection at $A$ while maintaining the three properties of the cycle. By repeating this operation, we obtain a cycle with no left turn and no intersection; i.e. a negative checkerboard. Thus $\A'$ must have a negative checkerboard.

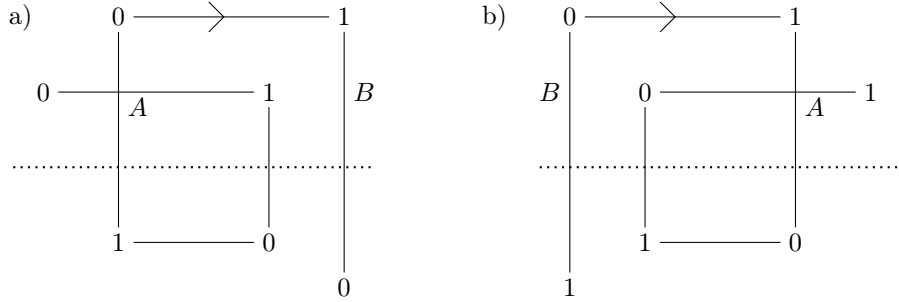
\begin{figure}[H]
    \begin{center}
    \begin{tikzpicture}[scale=1]
        
        \node at (-0.3,2) {a)};
        \node at (0,1) {0};
        \node at (1,2) {0};
        \node at (3,-1) {0};
        \node at (4,-1.6) {0};
        \node at (1,-1) {1};
        \node at (3,1) {1};
        \node at (4,2) {1};
        
        \draw (0.2,1)--(2.8,1);
        \draw (1.2,-1)--(2.8,-1);
        \draw (1.2,2)--(3.8,2);
        \draw (1,-0.8)--(1,1.8);
        \draw (3,-0.8)--(3,0.8);
        \draw (4,-1.4)--(4,1.8);
        \draw (2.2,2.2)--(2.4,2)--(2.2,1.8);
        
        \draw[dotted, thick] (-0.4,0)--(4.4,0);
        
        \node [right] at (1,0.8) {$A$};
        \node [right] at (4,1) {$B$};
        
        \node at (6,2) {b)};
        \node at (11,1) {1};
        \node at (10,2) {1};
        \node at (8,-1) {1};
        \node at (7,-1.6) {1};
        \node at (10,-1) {0};
        \node at (8,1) {0};
        \node at (7,2) {0};
        
        \draw (8.2,1)--(10.8,1);
        \draw (8.2,-1)--(9.8,-1);
        \draw (7.2,2)--(9.8,2);
        \draw (10,-0.8)--(10,1.8);
        \draw (8,-0.8)--(8,0.8);
        \draw (7,-1.4)--(7,1.8);
        \draw (8.2,2.2)--(8.4,2)--(8.2,1.8);
        
        \draw[dotted, thick] (6.6,0)--(11.4,0);
        
        \node [right] at (10,0.8) {$A$};
        \node [left] at (7,1) {$B$};
        
    \end{tikzpicture}
    \end{center}
    \caption{Section of a self-intersecting cycle with no left turn. The dotted line indicates the horizontal split of the zebra.}
    \label{spiralsbis}
\end{figure}

The case of vertically split zebras follows by symmetry along the main diagonal. The vertical reflection of a unique sink is a unique source; and the complement of a unique source is a unique sink. Thus, the same property holds if $\A$ is a split anti-zebra.

\end{proof}

\subsection{Proof of \Cref{SC}}

\begin{theorem*}
Let $R\in \N^p$, $C\in \N^q$, let $\A,\A' \in \MM(R,C)$ and let $\M=\A'-\A$. If $\M$ satisfies the following conditions:
\begin{enumerate}[(i)]
    \item $\exists \T\in \MM_{p-1,q-1}(\N)$: $\M=\sum_{i,k} t_{ik}\C_{i,i+1,k,k+1}$,
    \item For all $i$, each connected component of $\P_i(\M)$ is simply connected,
    \item $|i'-i|\leq 1$ and $|k'-k|\leq 1 \imp |t_{i'k'}-t_{ik}|\leq 1$,
\end{enumerate}
then $\A\rar \A'$.
\end{theorem*}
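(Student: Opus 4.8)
The plan is to exhibit an explicit positive switch that is feasible in $\A$ and that, after application, preserves all three hypotheses $(i)$–$(iii)$, thereby setting up an induction on the total weight $|\T| = \sum_{i,k} t_{ik}$. Since each positive switch at cell $(i,k)$ decrements $t_{ik}$ by $1$ (by the second remark following \Cref{NC}) and reaching $\A'$ amounts to reducing $\T$ to the zero matrix, it suffices to show that from any nonzero $\T$ satisfying the hypotheses there is \emph{some} feasible switch whose application yields a new difference matrix $\M' = \A'' - \A$ still satisfying $(i)$–$(iii)$, where $\A''$ is the switched matrix. Induction then drives $\T$ to $0$, giving the directed path $\A \rar \A'$.

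First I would identify a cell where a switch is feasible. Let $m = \max_{i,k} t_{ik}$ and consider the topmost-leftmost cell of $\P_m(\M)$, say at coordinates $(i,k)$, so $t_{ik} = m$ and the cells above and to the left have strictly smaller $\T$-values. A positive switch at $(i,k)$ is feasible in $\A$ precisely when $\A$ has a negative checkerboard at coordinates $(i,i+1,k,k+1)$; using \Cref{m_ij} together with the maximality of $t_{ik}$ and condition $(iii)$, I would argue that the four surrounding $\M$-coefficients force the corresponding $2\times 2$ block of $\A$ into the negative-checkerboard pattern. Concretely, at a corner cell of a top-level plateau, two of the neighbouring $\T$-values equal $m$ and two equal $m-1$ (or the boundary zero), so \Cref{m_ij} pins down the relevant entries of $\M$, which in turn constrain $\A$ enough to guarantee the $0,1,1,0$ arrangement needed for a negative checkerboard.

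Next I would verify that decrementing $t_{ik}$ preserves the three conditions. Condition $(i)$ is automatic since the new $\T$ still lies in $\MM_{p-1,q-1}(\N)$ (the decremented entry was the maximum, hence positive, so it stays non-negative). For $(iii)$, decrementing a single \emph{maximal} entry cannot create a gap of $2$ with any neighbour, since every neighbour was within $1$ of $m$ and is now within $1$ of $m-1$. The delicate point is condition $(ii)$: decrementing $t_{ik}$ removes cell $(i,k)$ from every $\P_j$ with $j = m$ (and changes nothing for $j < m$), so I must check that removing a corner cell from a simply connected polyomino keeps each component simply connected. Choosing the topmost-leftmost maximal cell ensures it is a genuine corner of $\P_m(\M)$ (not a cut vertex or a cell whose removal opens a hole), and here \Cref{no diag touching}/\Cref{touching polyo} is the key tool: simple connectivity plus the no-diagonal-touching property means a corner cell's removal cannot disconnect or puncture the region.

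\textbf{Main obstacle.} I expect the crux to be showing that the chosen feasible switch keeps \emph{all} the $\P_i$ simply connected simultaneously, i.e. maintaining condition $(ii)$ throughout the induction. The global simple-connectedness, combined with condition $(iii)$ (which controls how the levels are nested) and \Cref{no diag touching}, is what should rule out the pathology of \Cref{ex2}, where a non-simply-connected $\P_i$ stranded a $-1$ in the centre and blocked the path. The careful selection of \emph{which} maximal cell to switch — picking an extremal corner so its removal is topologically harmless — is the step I would need to execute most carefully, and it is where the full force of hypotheses $(ii)$ and $(iii)$ must be deployed.
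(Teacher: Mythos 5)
Your induction skeleton (drive $\sum_{i,k} t_{ik}$ to zero while preserving $(i)$--$(iii)$) matches the paper's, but the step you build it on fails: a \emph{unitary} positive switch at the topmost-leftmost cell of $\P_m(\M)$ is in general not feasible in $\A$. The four corners of a single plateau cell are forced to be $\pm1$ in $\M$ only when the neighbouring cell across each corner drops to $m-1$; whenever the cell to the right (or below) also lies in $\P_m(\M)$, \Cref{m_ij} gives $m_{i,k+1}=0$ (resp.\ $m_{i+1,k}=0$), so $\A$ and $\A'$ simply agree there and nothing forces the $0,1,1,0$ pattern you need. A concrete failure: take $\A=\begin{pmatrix}0&0&1\\0&0&0\\1&0&0\end{pmatrix}$ and $\A'=\begin{pmatrix}1&0&0\\0&0&0\\0&0&1\end{pmatrix}$, so $\M=\C_{1,3,1,3}$ and $\T=\begin{pmatrix}1&1\\1&1\end{pmatrix}$; all three hypotheses hold, yet $\A$ contains no unitary negative checkerboard anywhere --- the only feasible move is the single non-unitary switch $\C_{1,3,1,3}$. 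So no choice of ``extremal corner cell'' rescues a unitary-switch-only strategy; directed paths in $G(R,C)$ must in general use non-unitary switches.

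This is precisely the difficulty the paper's proof is organised around. It first proves \Cref{shadok}: the contour of any simply connected polyomino exhibits one of the three motifs of \Cref{pans}, each supplying a full rectangle $ABCD$ of cells contained in $\P_m(\M)$. Condition $(iii)$ together with \Cref{touching polyo} forces every cell adjacent to $\P_m(\M)$ to carry $m-1$, so the entries of $\M$ at the four corners of that \emph{rectangle} (not of each individual cell) are pinned to $1,-1,-1,1$ --- except that in motif 3 one corner may be $0$, in which case one is only guaranteed either a negative checkerboard in $\A$ or a positive checkerboard in $\A'$, and the paper accordingly builds the directed path from both ends, a case your proposal does not contemplate. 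The switch actually performed is the generally non-unitary one supported on the corners of $ABCD$; it decrements every $\T$-entry inside the rectangle simultaneously and crops $ABCD$ from $\P_m(\M)$, which is what preserves $(i)$--$(iii)$. The motif lemma and the passage to non-unitary switches are the heart of the proof, and both are missing from your argument.
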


In order to prove \Cref{SC}, we will first prove that one of the shapes described in \Cref{pans} must appear on the contour of any simply connected polyomino. We will then show that where this shape appears on $\P_m(\M)$, we can locate a checkerboard to switch. Repeating the operation will create a directed path from $\A$ to $\A'$.

\begin{figure}[ht]
    \begin{center}
    \begin{tikzpicture}[scale=0.5]
        \node [above] at (-1,2) {1)};
        \draw (0,0) rectangle (3,2);
        \node [above] at (0,2) {$A$};
        \node [below] at (0,0) {$B$};
        \node [below] at (3,0) {$C$};
        \node [above] at (3,2) {$D$};
        
        \node [above] at (5,2) {2)};
        \draw (6,2)--(8,2)--(8,0)--(11,0)--(11,2)--(13,2);
        \node [above] at (8,2) {$A$};
        \node [below] at (8,0) {$B$};
        \node [below] at (11,0) {$C$};
        \node [above] at (11,2) {$D$};
        
        \node [above] at (15,2) {3)};
        \draw (16,2)--(18,2)--(18,0)--(21,0)--(21,3);
        \node [above] at (18,2) {$A$};
        \node [below] at (18,0) {$B$};
        \node [below] at (21,0) {$C$};
        \node [right] at (21,2) {$D$};
        
    \end{tikzpicture}    
    \end{center}
    
    \medskip
    
    1) Motif 1 occurs when the entire polyomino is a rectangle.
   
    \bigskip
    2) In motif 2, the two vertical sides must have equal length; i.e. $AB=CD$. Also, the inside of rectangle $ABCD$ must be entirely included in the polyomino. The lengths are variable and the motif may be rotated.
    
    \bigskip
    3) In motif 3, the rightmost side is longer than $AB$. The inside of rectangle $ABCD$ must be entirely included in the polyomino. The lengths are variable and the motif may be rotated or reflected.
    
    \caption{Motifs from the contour of a polyomino}
    \label{pans}
\end{figure}

\begin{lemma}\label{shadok}
On the contour of any simply connected polyomino, there appears at least one of the three motifs described in \Cref{pans}.
\end{lemma}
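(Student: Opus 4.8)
\textbf{The plan} is to prove Lemma~\ref{shadok} by a structural/extremal argument on the contour of the polyomino. Since the polyomino $P$ is simply connected, its boundary is a single closed curve with no holes, and the boundary consists of alternating horizontal and vertical unit segments meeting at convex ($90^\circ$) and reflex ($270^\circ$) corners. The strategy is to locate a distinguished ``extreme'' feature of the boundary and show that near it one of the three motifs must appear.

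\textbf{First} I would set up the extremal object. Consider the topmost row of cells occupied by $P$, and among those, take a maximal horizontal run of occupied cells in that row; call this run $\rho$. Because $P$ is simply connected and $\rho$ lies on the very top, the top edge $A$--$D$ of the bounding box of $\rho$ is part of the contour, and the two vertical segments descending from its endpoints $A$ and $D$ are also on the contour (there is nothing above). This immediately sets up the skeleton $A,B,C,D$ of the three motifs, where $A$ and $D$ are the top corners and $B,C$ are where the two descending vertical walls first turn. The classification into motifs~1, 2, 3 then hinges on \emph{how far down} each of the two vertical walls $AB$ and $DC$ runs before turning inward.

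\textbf{Next} I would split into cases according to the relative lengths $AB$ and $DC$. If both walls descend the full height of the polyomino and then the bottom closes off straight across, we are in motif~1 (the whole polyomino is a rectangle, or at least an $A,B,C,D$ rectangle whose interior is filled). If the two walls have equal length and both turn inward at the same level, we obtain motif~2; here simple connectivity guarantees that the rectangle $ABCD$ has its interior entirely in $P$, since no hole can intervene. If the two walls have unequal length, the shorter one turns inward first, producing the asymmetric motif~3. In each case I must verify the stated side conditions: that rectangle $ABCD$ is entirely contained in $P$ (this is exactly where simple connectivity is used --- any missing interior cell would force a hole or disconnect the contour), and that the length inequalities in motifs~2 and 3 hold. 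To make ``first turn inward'' precise and to guarantee the interior fill, I would scan the region directly below $\rho$ and use that $P$ has no holes to conclude that the cells between the two walls, down to the level of the first turn, are all present.

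\textbf{The main obstacle} will be handling the geometry cleanly without an exhaustive and error-prone case analysis of all the ways a contour can wiggle below the top run $\rho$. The subtlety is that the walls descending from $A$ and $D$ need not be straight --- they could jog outward before turning inward --- so ``the first inward turn'' and ``the interior is filled'' require care. I expect the cleanest route is to define $B$ and $C$ as the first reflex (inward) corners encountered while tracing the contour downward from $A$ and from $D$ respectively, then argue that everything above and between them, bounded by $\rho$, is forced to be filled by simple connectivity; the three motifs are then precisely the trichotomy $AB=DC$ with straight closure (motif~1), $AB=DC$ with inward turn (motif~2), and $AB\neq DC$ (motif~3). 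Getting the reflection/rotation conventions of \Cref{pans} to match each case is routine once the extremal run $\rho$ is fixed.
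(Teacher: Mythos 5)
Your overall skeleton---locate an extremal boundary feature and read off one of the three motifs from the two walls hanging off it---is the same as the paper's, but your choice of extremal feature, a maximal run $\rho$ in the topmost row, does not work, and the step you yourself flag as the main obstacle is exactly where the argument fails. The claim that simple connectivity forces the cells between the two descending walls (down to their first turns) to be filled is false. Consider the simply connected polyomino
$$\begin{pmatrix}1&1&1\\1&0&1\\1&0&1\end{pmatrix}$$
(where $1$ marks a present cell). The topmost run is the whole first row; both walls descend straight for three rows and then terminate in convex corners; yet the $3\times3$ rectangle between them is not filled. The two empty cells form a bay opening downward, so no hole is created and simple connectivity is not violated. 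Your trichotomy also fails to cover this configuration: the polyomino is not a rectangle, the two walls have equal length but the rectangle $ABCD$ is not contained in the polyomino, and neither wall ends in a reflex corner, so none of motifs 1, 2 or 3 is produced at the top run. (The lemma still holds for this example---motif 3 occurs on each of the two side arms---but your procedure never looks there.)

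The paper avoids exactly this trap by choosing the extremal segment $[B,C]$ differently: it is a contour segment at maximal distance from a basepoint $x_0$ in an orthogonal-path metric (number of segments, then length of the last segment, ordered lexicographically). With that choice, the containment of $ABCD$ in the polyomino is forced not by simple connectivity alone but by the maximality of $B$ and $C$: if part of $ABCD$ were missing, one of $B$, $C$ could only be reached by a path with strictly more turns, contradicting that both endpoints lie on the same maximal-distance segment; the same maximality is what forces $A$ (and $D$ in motif 2) to be a reflex corner. To repair your approach you would need either to recurse into the arms whenever the region under $\rho$ is hollow, or to replace ``topmost row'' by an extremal choice carrying a comparable rigidity property. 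As written, the proof has a genuine gap.
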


\begin{proof}
Let $\P$ be a simply connected polyomino. If $\P$ is a rectangle, we have motif 1. We will now assume that $\P$ is not a rectangle.

We define \emph{left} (resp. \emph{right}) corners of $\P$ as corners where the contour of $\P$, when followed clockwise, turns left (resp. right). Note that $\P$ is locally non-convex (resp. convex) around left (resp. right) corners.

For $x,y\in \P$, we define an \emph{orthogonal path from $x$ to $y$} as a line joining $x$ and $y$ consisting of only horizontal and vertical segments included inside $\P$. We denote by $OP(x,y)$ the set of orthogonal paths joining $x$ and $y$. 
We then define the \emph{length} of {an orthogonal path $\L$} as the pair $(N_\L,l_\L)$, where $N_{\L}$ is the number of segments of $\L$ and $l_{\L}$ is the length of the last segment. We define $d(x,y)$, the \emph{distance} between $x$ and $y$, as the length of the minimal orthogonal path, going by lexicographic order (ie. $d(x,y)= (N_0,l_0)$ with $N_0=\min_{\L\in OP(x,y)}N_{\L}$ and $l_0=\min_{\L\in OP(x,y); N_\L=N_0}l_{\L}$). \footnote{Note that $d$ is not a distance in the usual sense, as its co-domain is not $\R^+$. Yet, while $d':(x,y)\mapsto N_0+\frac2\pi \arctan l_0$ defines an actual distance, $d$ is more practical for our purposes.}

Since $\P$ is not a rectangle, it is not convex; so we may choose a point $x_0\in \P$ such that $\P$ is not starred in $x_0$. The set of points in $\P$ which maximise the distance to $x_0$ comprises one or several segments from the contour of $\P$. Let $B$ and $C$ be the ends of one such segment. Both $B$ and $C$ must be corners of $\P$, else points on $(B,C)$ outside of $[B,C]$ would be at an equal or greater distance from $x_0$. Let $A$ and $D$ be the other two corners adjacent to $B$ and $C$, respectively. $B$ and $C$ must be right corners, otherwise points inside of $[AB]$ or $[CD]$ would be at a greater distance from $x_0$. 
The rectangle $ABCD$ must be included inside of $\P$: if a part of $ABCD$ was outside $\P$, since $\P$ is simply connected, the minimal orthogonal path from $x_0$ to $B$ would need to go around one side of the missing part, and reaching the other side from $x_0$ would require a path with more turns.

We may assume w.l.o.g that $AB\leq CD$. Then, $A$ must be a left corner, else reaching $[AB]$ from $x_0$ would require one more turn than $[BC]$. If $AB=CD$, $D$ must similarly be a left turn, and we have the second motif. If $AB<CD$, we have the third motif.
\end{proof}

We may now prove \Cref{SC}.

\begin{proof}[Proof of \Cref{SC}]
Let $\A$, $\A'$ and $\M$ satisfy the conditions of \Cref{SC}; let $\T$ satisfy $(i)$ and let $m=\max t_{ik}$. It follows from applying \Cref{shadok} to a connected component of $\P_m(\M)$ that there is a rectangle $ABCD$ corresponding to one of the three motifs described in \Cref{pans} included inside $\P_m(\M)$. Since $m$ is the maximum coefficient, all cells in $\P_m(\M)$ have coefficient $m$. It follows from $(iii)$ and \Cref{touching polyo} that all cells orthogonally or diagonally adjacent to $\P_m(\M)$ have coefficient $m-1$. From \Cref{m_ij}, we have $m_{ij}=t_{ij}+t_{i-1,j-1}-t_{i,j-1}-t_{i-1,j}$, where $t_{ij}=0$ if $i=0$ or $p$ or $j=0$ or $q$. For a right (resp. left) corner of $\P_m(\M)$, three (resp. one) of the incident cells have coefficient $m-1$ and one (resp. three) has coefficient $m$. Thus coefficients of $\M$ located at corners of $\P_m(\M)$ must be $1$ or $-1$. More precisely, regardless of the orientation of $ABCD$, the coefficients of $\M$ located at the corners are $1$ for the top left and bottom right corners and $-1$ for the top right and bottom left, except for $D$ in motif 3 which has a $0$.

A $1$ in $\M$ means a $0$ in $\A$ and $1$ in $\A'$; a $-1$ in $\M$ is the reverse; and a $0$ in $\M$ means either two $0$s or two $1$s in $\A$ and $\A'$. So, a $\begin{pmatrix}
1 & -1\\
-1 & 1
\end{pmatrix}$ sub-matrix in $\M$ means there is a negative checkerboard in $\A$ and a positive checkerboard in $\A'$. If any one of the four coefficients is replaced by $0$, then there is either a negative checkerboard in $\A$ or a positive checkerboard in $\A'$. Switching that checkerboard, either in $\A$ or in $\A'$, results in a new instance where $ABCD$ has been cropped from $\P_m(\M)$. Thus, the new instance still satisfies the three conditions of \Cref{SC}. Repeating the process terminates with $\M=0$, as $\sum t_{ik}$ is reduced at each step. This constructs a directed path from $\A$ to $\A'$ in $G(R,C)$.
\end{proof}

\bibliographystyle{plain}
\bibliography{references}

\end{document}